\documentclass[12pt, a4paper]{article}

\usepackage[utf8]{inputenc}
\usepackage[T1]{fontenc}
\usepackage[english]{babel}
\usepackage{geometry}
\usepackage{amsmath, amssymb, amsthm}
\usepackage{mathtools}
\usepackage{bm}
\usepackage{graphicx}
\usepackage{float}
\usepackage[table]{xcolor}
\usepackage{enumitem}
\usepackage{hyperref}
\hypersetup{colorlinks=true, linkcolor=blue!60!black, citecolor=blue!60!black, urlcolor=blue!60!black}
\usepackage[numbers]{natbib}
\usepackage{setspace}
\usepackage{parskip}
\usepackage{titlesec}
\usepackage{fancyhdr}
\usepackage{tabularx}
\usepackage{multirow}
\usepackage{algorithm}
\usepackage{algpseudocode}

\titleformat{\section}{\normalfont\large\bfseries}{\thesection}{1em}{}
\titlespacing*{\section}{0pt}{16pt}{6pt}
\titleformat{\subsection}{\normalfont\normalsize\bfseries}{\thesubsection}{1em}{}
\titlespacing*{\subsection}{0pt}{12pt}{4pt}

\newcommand{\R}{\mathbb{R}}
\newcommand{\norm}[1]{\left\|#1\right\|}

\theoremstyle{plain}
\newtheorem{proposition}{Proposition}
\theoremstyle{remark}
\newtheorem{remark}{Remark}
\theoremstyle{plain}

\newtheorem{theorem}{Theorem}
\theoremstyle{remark}

\begin{document}
\thispagestyle{empty}

\begin{center}
\vspace*{1cm}
\includegraphics[height=2.75cm]{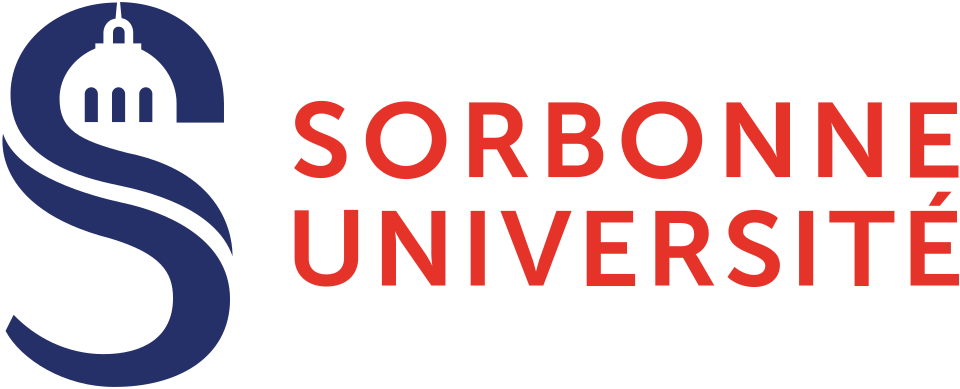}
\hspace{2.8cm}
\includegraphics[height=2.8cm]{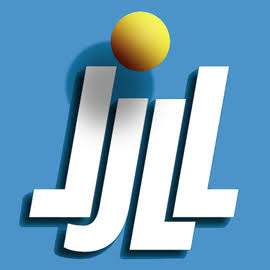}
\vspace{2.5cm}

{\large Summer Research Internship Thesis, Laboratoire Jacques-Louis Lions}
\vspace{-0.5mm}
\rule{\linewidth}{0.7pt}

{\Large \bfseries Real-Time Model Predictive Control Algorithms \\[7pt]
for Autonomous Spacecraft Guidance}

\rule{\linewidth}{0.7pt}

\begin{tabular}{p{12.5cm} p{2.5cm}}
{\large \textit{Author:}}   & {\large \textit{Supervisor: \ }} \\[7pt]
{\large Mohammed-Adnane GARAB} \quad & {\large Max Cerf \ }             \\
\end{tabular}
\vfill

\vspace{1mm}

\end{center}

\vfill
\begin{center}
{\small Undergraduate Research Intern} \\[2pt]
{\normalsize Summer 2026}
\end{center}
\newpage
{
\setstretch{1}
\setlength{\parskip}{0pt}
\tableofcontents
}
\newpage

\newpage

 \section*{Acknowledgements}
\vspace{3mm}

\textit{I would like to express my sincere gratitude to Max Cerf for
his supervision throughout this internship, for the rigour and clarity
of his guidance, and for the numerous discussions that greatly
contributed to the direction and depth of this work.}

\newpage

\section{Introduction}

Spacecraft rendezvous and proximity operations are among the most technically
demanding challenges in modern space mission design. The goal is to steer an
active vehicle, the \emph{chaser}, towards a passive or semi-passive \emph{target}
in orbit around the Earth, and to dock with it at a prescribed relative
configuration and velocity. Concrete examples include the resupply of the
International Space Station, active debris removal campaigns, and the Mars
Sample Return capture scenario currently studied by ESA and NASA.

Since round-trip communication delays between the spacecraft and the ground can
reach several minutes, fully autonomous onboard guidance is a strict operational
requirement. The chaser must plan, replan, and execute its trajectory without any
human supervision, while simultaneously satisfying a rich set of physical and
safety constraints: bounded thrust, a line-of-sight cone imposed by the navigation
sensors, a soft-docking velocity profile near the docking port, and collision
avoidance margins in the event of a thruster failure.

Traditional guidance architectures rely on pre-computed open-loop manoeuvre
sequences with ad hoc error corrections. These approaches are straightforward to
certify, but they are fundamentally limited in robustness: a single perturbation
event, such as an unexpected atmospheric drag force or a thrust calibration error,
can cause the chaser to drift from its planned path and compromise the mission.
This has motivated, over the past two decades, a growing body of research on
closed-loop, optimisation-based guidance laws that handle constraints explicitly
and adapt online to perturbations. Among these, Model Predictive Control (MPC)
has established itself as the reference framework, owing to its ability to combine
systematic constraint handling, receding-horizon online replanning, and robust
feedback in a single coherent formulation.

Model predictive control has already been investigated extensively for
spacecraft rendezvous, both in nominal and robust settings. Early works
demonstrated safe and constraint-aware rendezvous trajectories
\cite{Breger2008}, while later studies developed MPC guidance laws for
rendezvous and proximity operations under disturbances \cite{DiCairano2012}.
Design and implementation studies further established practical MPC
architectures for spacecraft rendezvous, including constrained thrust
allocation and terminal capture strategies
\cite{Hartley2012,Hartley2013}. Tutorial and implementation-oriented
treatments further clarified the practical design choices involved in
spacecraft rendezvous MPC \cite{Hartley2015}. More recent contributions
extended the framework to tube-based robust rendezvous with uncertainty
adaptation \cite{Specht2023,Oestreich2023} and to nonlinear guidance via
successive convexification on more challenging orbital regimes
\cite{Szmuk2025}.

The purpose of this work is to study and compare four families of MPC algorithms
for autonomous spacecraft guidance:
\begin{enumerate}[label=\roman*)]
  \item Linear MPC,
  \item Tube MPC,
  \item Fast and Embedded MPC,
  \item Successive Convexification.
\end{enumerate}
We first introduce the Clohessy-Wiltshire-Hill relative-motion model, then
describe the four MPC families and their numerical solvers, and report
numerical results on a planar orbital rendezvous benchmark. Tube MPC stands
out for its formal robustness guarantee at no extra online cost, and the
remainder of this work focuses on it specifically. Its validity is assessed
across five standard rendezvous manoeuvres under a single fixed controller
configuration, before its condensed matrices are simplified through a
polynomial approximation and the fuel-accuracy trade-off induced by the cost
weights is examined, yielding a provably correct, closed-form bound on the
worst-case tracking error. The framework is then extended beyond a fixed
circular reference orbit to an arbitrary trajectory tracked through an
online-recomputed linearisation. Finally, the feasibility of the resulting
pipeline for onboard execution is assessed by re-implementing its core
numerical routines from first principles, without reliance on external
scientific libraries.

\section{The Clohessy-Wiltshire-Hill Model}

\subsection{Relative dynamics}

The dynamics of the chaser relative to the target, valid when the separation is
small compared to the orbital radius, are captured by the linearised
Clohessy-Wiltshire-Hill (CWH) equations \cite{Clohessy1960}. Letting
$\bm{x} = (\delta x,\,\delta y,\,\delta\dot{x},\,\delta\dot{y})^\top \in \R^4$
denote the relative state in the Local Vertical Local Horizontal (LVLH) frame
and $\bm{u} = (u_x, u_y)^\top \in \R^2$ the control acceleration, the
continuous-time model reads
\begin{equation}
  \dot{\bm{x}}(t) = A_c\,\bm{x}(t) + B_c\,\bm{u}(t),
\end{equation}
with
\begin{equation}
  A_c = \begin{pmatrix}
    0    & 0 & 1    & 0  \\
    0    & 0 & 0    & 1  \\
    3n^2 & 0 & 0    & 2n \\
    0    & 0 & -2n  & 0
  \end{pmatrix},
  \qquad
  B_c = \begin{pmatrix}
    0 & 0 \\ 0 & 0 \\ 1 & 0 \\ 0 & 1
  \end{pmatrix},
\end{equation}
where $n = \sqrt{\mu/R_0^3}$ is the mean orbital rate, $\mu = 3.986\times10^{14}$\,m$^3$/s$^2$
is Earth's gravitational parameter, and $R_0$ is the orbital radius of the
target (at 500\,km altitude, $n = 1.107\times10^{-3}$\,rad/s).

Discretised with sampling period $T_s$ via a \textbf{zero-order hold}
(meaning the control $\bm{u}$ is held constant over each interval
$[t_k, t_{k+1})$), the model becomes
\begin{equation}
  \label{eq:CW_disc}
  \bm{x}_{k+1} = A_d\,\bm{x}_k + B_d\,\bm{u}_k,
  \qquad A_d = e^{A_c T_s},\quad
  B_d = \int_0^{T_s} e^{A_c\tau} B_c\,\mathrm{d}\tau.
\end{equation}

\textbf{Derivation of $A_d$ and $B_d$.}
The discrete matrices arise from solving the continuous ODE exactly over one
sampling interval. Fix a step $k$ and consider
$\dot{\bm{x}}(t) = A_c\,\bm{x}(t) + B_c\,\bm{u}(t)$ on $[t_k, t_{k+1}]$.
Multiply both sides by the integrating factor $e^{-A_c t}$:
\begin{equation}
  \frac{\mathrm{d}}{\mathrm{d}t}\!\left[e^{-A_c t}\bm{x}(t)\right]
  = e^{-A_c t}B_c\,\bm{u}(t).
\end{equation}
Integrating from $t_k$ to $t_{k+1} = t_k + T_s$ and multiplying by
$e^{A_c t_{k+1}}$:
\begin{equation}
  \bm{x}(t_{k+1})
  = e^{A_c T_s}\,\bm{x}(t_k)
  + e^{A_c t_{k+1}}\!\int_{t_k}^{t_{k+1}} e^{-A_c\tau}\,B_c\,\bm{u}(\tau)\,\mathrm{d}\tau.
\end{equation}
Since $\bm{u}(\tau) = \bm{u}_k$ is constant (zero-order hold), setting
$s = t_{k+1} - \tau$ gives
$e^{A_c t_{k+1}}\int_{t_k}^{t_{k+1}}e^{-A_c\tau}B_c\,\mathrm{d}\tau
= \int_0^{T_s}e^{A_c s}B_c\,\mathrm{d}s$,
which yields exactly $\bm{x}_{k+1} = A_d\bm{x}_k + B_d\bm{u}_k$.
These are \emph{exact}, not approximations. Both are computed once offline
using \texttt{scipy.linalg.expm}.

In the presence of bounded disturbances (atmospheric drag, actuation errors),
the real dynamics are
\begin{equation}
  \bm{x}_{k+1} = A_d\,\bm{x}_k + B_d\,\bm{u}_k + \bm{\xi}_k,
  \qquad \bm{\xi}_k \in \mathcal{W},
\end{equation}
where $\mathcal{W} \subset \R^4$ is a compact convex set bounding the
worst-case disturbance at each step.

\subsection{Tracking objective}

Given a nominal trajectory $\{\bm{x}_k^{\mathrm{ref}}\}$ pre-computed on the
ground, the onboard guidance law must correct the control command in real time
so that the chaser remains within a corridor
$\norm{\bm{x}_k - \bm{x}_k^{\mathrm{ref}}} \leq \varepsilon$ while minimising
a cost of the form $\norm{\bm{u}} + \norm{\bm{e}}$, where
$\bm{e} = \bm{x} - \bm{x}^{\mathrm{ref}}$ is the tracking error. The parameter
$\varepsilon > 0$ is the \emph{corridor width}.

\section{MPC Algorithms for Spacecraft Guidance}

\subsection{General formulation}

The general MPC optimisation solved at each time step $k$ is
\begin{equation}
  \label{eq:MPC}
  \min_{\{\bm{u}_i\}_{i=k}^{k+N-1}}
  \; V_f(\bm{x}_{k+N}) + \sum_{i=k}^{k+N-1} \ell(\bm{x}_i, \bm{u}_i)
\end{equation}
subject to the dynamics \eqref{eq:CW_disc}, input constraints
$\bm{u}_i \in \mathcal{U}$, state constraints $\bm{x}_i \in \mathcal{X}$,
and a terminal constraint $\bm{x}_{k+N} \in \mathcal{X}_f$. The notation is
as follows.
\begin{itemize}
  \item $\ell(\bm{x}_i, \bm{u}_i)$ is the \textbf{stage cost}: it penalises
        the tracking error and the control effort at each step $i$. A typical
        choice is
        $\ell = (\bm{x}_i - \bm{x}_i^{\mathrm{ref}})^\top Q\,
        (\bm{x}_i - \bm{x}_i^{\mathrm{ref}}) + \bm{u}_i^\top R\,\bm{u}_i$.
  \item $V_f(\bm{x}_{k+N})$ is the \textbf{terminal cost}: it penalises the
        final predicted state $\bm{x}_{k+N}$ at the end of the horizon, chosen
        to compensate for the finite horizon and ensure stability.
  \item $N \geq 1$ is the \textbf{prediction horizon}: the number of future
        steps considered.
  \item $\mathcal{U} = \{\bm{u} : \norm{\bm{u}}_\infty \leq u_{\max}\}$ is
        the input constraint set (thrust bound).
  \item $\mathcal{X}$ is the corridor constraint set
        ($\norm{\bm{x} - \bm{x}^{\mathrm{ref}}} \leq \varepsilon$).
  \item $\mathcal{X}_f$ is a small neighbourhood of the docking target,
        chosen to ensure the chaser is close enough to the goal at the end
        of the horizon.
\end{itemize}
Only the first element $\bm{u}_k^\star$ of the optimal sequence is applied;
the optimisation is repeated at $k+1$ with updated state measurements. This
\emph{receding-horizon} strategy turns an open-loop plan into a closed-loop
feedback law. Under standard terminal conditions on $V_f$ and $\mathcal{X}_f$,
this scheme guarantees recursive feasibility and asymptotic stability of the
nominal trajectory \cite{Mayne2000}.

\begin{table}[H]
\centering
\caption{Summary of MPC families for autonomous spacecraft guidance.}
\vspace{4mm}
\label{tab:comparison}
\renewcommand{\arraystretch}{1.4}
\small
\begin{tabular}{lllll}
\hline
\textbf{Method} & \textbf{Core problem} & \textbf{Robustness}
  & \textbf{Comp.\ cost} & \textbf{Corridor guarantee} \\
\hline
Linear MPC & QP             & Implicit (feedback)
  & Low      & Nominal only \\
Tube MPC   & QP (tightened) & Formal ($\forall\bm{\xi}\in\mathcal{W}$)
  & Low      & By design \\
Fast MPC   & QP (optimised) & Same as base
  & Very low & Inherited \\
SCvx       & SOCP (iter.)   & Local
  & High     & Nonlinear \\
\hline
\end{tabular}
\end{table}

\subsection{Linear MPC}
\label{sec:linear_mpc}

Take the quadratic stage cost
\begin{equation}
  \ell(\bm{x}_i, \bm{u}_i)
  = (\bm{x}_i - \bm{x}_i^{\mathrm{ref}})^\top Q\,(\bm{x}_i - \bm{x}_i^{\mathrm{ref}})
  + \bm{u}_i^\top R\,\bm{u}_i,
\end{equation}
where $Q \in \R^{4\times4}$, $Q \succeq 0$ weights the tracking error and
$R \in \R^{2\times2}$, $R \succ 0$ weights the control effort. Stack all $N$
predicted states and controls into
\begin{equation}
  \bm{X} =
  \begin{pmatrix}\bm{x}_{k+1}\\\vdots\\\bm{x}_{k+N}\end{pmatrix}
  \in \R^{4N},
  \qquad
  \bm{v} =
  \begin{pmatrix}\bm{u}_k\\\vdots\\\bm{u}_{k+N-1}\end{pmatrix}
  \in \R^{2N}.
\end{equation}
Because the dynamics are linear, applying $\bm{x}_{i+1} = A_d\bm{x}_i +
B_d\bm{u}_i$ repeatedly from $\hat{\bm{x}}_k$ gives:
\begin{align*}
  \bm{x}_{k+1} &= A_d\hat{\bm{x}}_k + B_d\bm{u}_k, \\
  \bm{x}_{k+2} &= A_d^2\hat{\bm{x}}_k + A_dB_d\bm{u}_k + B_d\bm{u}_{k+1}, \\
  \bm{x}_{k+3} &= A_d^3\hat{\bm{x}}_k + A_d^2B_d\bm{u}_k
                  + A_dB_d\bm{u}_{k+1} + B_d\bm{u}_{k+2}, \\
               &\;\;\vdots
\end{align*}
The pattern is clear: each step adds one more power of $A_d$. Collecting all
$N$ steps into the single matrix equation
$\bm{X} = \mathcal{A}\hat{\bm{x}}_k + \mathcal{B}\bm{v}$:
\begin{equation}
  \label{eq:condensed}
  \mathcal{A} =
  \begin{pmatrix}
    A_d \\ A_d^2 \\ A_d^3 \\ \vdots \\ A_d^N
  \end{pmatrix}
  \in \R^{4N\times4},
  \quad
  \mathcal{B} =
  \begin{pmatrix}
    B_d          & 0            & 0      & \cdots & 0   \\
    A_dB_d       & B_d          & 0      & \cdots & 0   \\
    A_d^2B_d     & A_dB_d       & B_d    & \cdots & 0   \\
    \vdots       & \vdots       & \ddots & \ddots & \vdots \\
    A_d^{N-1}B_d & A_d^{N-2}B_d & \cdots & A_dB_d & B_d
  \end{pmatrix}
  \in \R^{4N\times2N}.
\end{equation}
The $(i,j)$-th block of $\mathcal{B}$ is $A_d^{i-j}B_d$ for $i \geq j$ and
$0$ for $i < j$. Setting
\begin{equation}
  \bm{c} = \mathcal{A}\hat{\bm{x}}_k - \bm{X}^{\mathrm{ref}} \in \R^{4N}
\end{equation}
(a constant vector for a given current state $\hat{\bm{x}}_k$ and reference
$\bm{X}^{\mathrm{ref}}$), the condensed relation reads
$\bm{X} - \bm{X}^{\mathrm{ref}} = \mathcal{B}\bm{v} + \bm{c}$.
Substituting into the stage-cost sum:
\begin{align}
  J
  &= (\bm{X} - \bm{X}^{\mathrm{ref}})^\top \bar{Q}\,
     (\bm{X} - \bm{X}^{\mathrm{ref}})
   + \bm{v}^\top\bar{R}\bm{v} \notag\\
  &= (\mathcal{B}\bm{v} + \bm{c})^\top \bar{Q}\,
     (\mathcal{B}\bm{v} + \bm{c})
   + \bm{v}^\top\bar{R}\bm{v}. \notag
\end{align}
Expanding the quadratic form
$(\mathcal{B}\bm{v}+\bm{c})^\top\bar{Q}(\mathcal{B}\bm{v}+\bm{c})
= \bm{v}^\top\mathcal{B}^\top\bar{Q}\mathcal{B}\bm{v}
+ 2\bm{c}^\top\bar{Q}\mathcal{B}\bm{v}
+ \bm{c}^\top\bar{Q}\bm{c}$
and grouping terms in $\bm{v}$:
\begin{align}
  J
  &= \bm{v}^\top
     \underbrace{(\mathcal{B}^\top\bar{Q}\mathcal{B}+\bar{R})}_{H}
     \bm{v}
   + 2\,\underbrace{(\mathcal{B}^\top\bar{Q}\bm{c})^\top}_{\bm{f}^\top}
     \bm{v}
   + \underbrace{\bm{c}^\top\bar{Q}\bm{c}}_{\text{constant in }\bm{v}}.
\end{align}
where $\bar{Q} = \mathrm{blkdiag}(Q,\ldots,Q,P) \in \R^{4N\times4N}$ and
$\bar{R} = \mathrm{blkdiag}(R,\ldots,R) \in \R^{2N\times2N}$. Here $P \succeq 0$
is the \textbf{terminal weight matrix}, typically chosen as the solution of the
discrete algebraic Riccati equation, which makes the terminal cost
$V_f(\bm{x}) = \bm{x}^\top P\bm{x}$ equivalent to the infinite-horizon LQR
cost and ensures closed-loop stability.

Dropping the constant term, problem \eqref{eq:MPC} reduces to the
\textbf{Quadratic Program (QP)}:
\begin{equation}
  \label{eq:qp}
  \min_{\bm{v}}\;\tfrac{1}{2}\bm{v}^\top H\bm{v} + \bm{f}^\top\bm{v}
  \quad\text{subject to}\quad G\bm{v} \leq \bm{g},
\end{equation}
with
\begin{equation}
  \label{eq:Hf}
  H = \mathcal{B}^\top\bar{Q}\mathcal{B} + \bar{R} \in \R^{2N\times2N},
  \qquad
  \bm{f} = \mathcal{B}^\top\bar{Q}
            \!\left(\mathcal{A}\hat{\bm{x}}_k - \bm{X}^{\mathrm{ref}}\right)
            \in \R^{2N}.
\end{equation}
The matrix $G \in \R^{p\times2N}$ and the vector $\bm{g} \in \R^p$ encode all
linear inequality constraints: the input box $\norm{\bm{u}_i}_\infty \leq u_{\max}$
and, rewritten via \eqref{eq:condensed}, the corridor constraint on $\bm{X}$.
Since $R \succ 0$ implies $H \succ 0$, this QP is strictly convex and has a
unique global minimiser. The matrices $\mathcal{A}$, $\mathcal{B}$, $H$, and
$G$ depend only on $A_d$, $B_d$, $Q$, $R$, $P$, and $N$, all fixed before
the mission. They are computed once offline. At each guidance step, only $\bm{f}$
needs to be updated, since it depends linearly on $\hat{\bm{x}}_k$.

\begin{remark}
\label{rem:kappa_growth}
Because all eigenvalues of $A_d$ have magnitude 1, the powers $A_d^j$ never
shrink, so all columns of $\mathcal{B}$ stay large across all $N$ steps.
This causes the condition number
$\kappa(H) = \lambda_{\max}(H)/\lambda_{\min}(H)$ to blow up: with $N=40$,
$\lambda_{\max}(Q) \approx 3\times10^5$, and $\lambda_{\min}(R)=100$,
one obtains $\kappa(H) \approx 1.4\times10^7$. This is a direct consequence
of the marginal stability of CWH, and it explains why first-order solvers
stall on this problem.
\end{remark}

Hartley \cite{Hartley2015} compared four cost structures for long-range
rendezvous: the quadratic cost yields smooth trajectories, while the 1-norm
cost promotes sparse bang-off-bang thrust profiles with better fuel efficiency.
Di~Cairano, Park and Kolmanovsky \cite{DiCairano2012} validated Linear MPC in
closed-loop simulation with disturbances up to 10\% of maximum thrust, achieving
docking errors below 1.2\,cm. From a numerical optimisation viewpoint, the
resulting online problem belongs to the standard class of strictly convex
quadratic programmes long studied in MPC, for which both active-set and
interior-point methods provide reliable baseline solvers \cite{Rao1998,Rawlings2017}.
The key limitation is the absence of a formal corridor guarantee: the
receding-horizon feedback corrects disturbances reactively, but nothing prevents
$\norm{\bm{e}_k} \leq \varepsilon$ from being temporarily violated.

\subsection{Tube MPC}

Tube MPC \cite{Mayne2005, Langson2004} fixes the corridor guarantee problem at no extra
online cost. The true state is split into two parts,
\begin{equation}
  \bm{x}_k = \bm{z}_k + \bm{e}_k,
\end{equation}
where $\bm{z}_k \in \R^4$ is a nominal state driven by a planned input
$\bm{v}_k$, and $\bm{e}_k = \bm{x}_k - \bm{z}_k$ is the error caused by
disturbances. The actual thrust applied to the spacecraft is
\begin{equation}
  \label{eq:tube_control}
  \bm{u}_k = \bm{v}_k - K\bm{e}_k,
\end{equation}
where $K \in \R^{2\times4}$ is a pre-computed feedback gain and
$K\bm{e}_k$ is a real-time correction that pushes the chaser back
towards the nominal trajectory whenever it drifts.

The gain $K \in \R^{2\times4}$ is found by solving the following
minimisation problem, called the \textbf{Linear Quadratic Regulator
(LQR)}: find the linear feedback $\bm{u}_k = -K\bm{e}_k$ that
minimises the total cost over an infinite horizon,
\begin{equation}
  J_\infty = \sum_{k=0}^{\infty}
  \bigl(\bm{e}_k^\top Q\,\bm{e}_k + \bm{u}_k^\top R\,\bm{u}_k\bigr),
\end{equation}
subject to the error dynamics $\bm{e}_{k+1} = A_d\bm{e}_k +
B_d\bm{u}_k$, where $A_d \in \R^{4\times4}$ describes how the state
evolves in one step without thrust, $B_d \in \R^{4\times2}$ describes
the effect of a thrust pulse on the state, $Q \in \R^{4\times4}$
penalises position and velocity errors, and $R \in \R^{2\times2}$
penalises thrust usage.

To minimise $J_\infty$, we look for the optimal $\bm{u}_k$ at each
step $k$. At step $k$, the cost to pay from now until infinity can be
written as two parts: the cost of this step, plus the cost of all
future steps:
\begin{equation}
  J_\infty
  = \underbrace{\bm{e}_k^\top Q\bm{e}_k + \bm{u}_k^\top R\bm{u}_k}
    _{\text{cost at step }k}
  + \underbrace{\sum_{j=k+1}^{\infty}
    \bigl(\bm{e}_j^\top Q\bm{e}_j + \bm{u}_j^\top R\bm{u}_j\bigr)}
    _{\text{cost from step }k+1\text{ onwards}}.
\end{equation}
The key observation is that the second part depends only on
$\bm{e}_{k+1}$, not on $\bm{e}_k$ directly. We therefore minimise
over $\bm{u}_k$ by differentiating the cost of step $k$ plus the
cost from $k+1$ onwards with respect to $\bm{u}_k$, and setting the
result to zero. Substituting $\bm{e}_{k+1} = A_d\bm{e}_k +
B_d\bm{u}_k$ and carrying out the differentiation gives:
\begin{equation}
  \frac{\partial}{\partial \bm{u}_k}
  \Bigl[
    \bm{u}_k^\top R\bm{u}_k
    + (A_d\bm{e}_k + B_d\bm{u}_k)^\top P
      (A_d\bm{e}_k + B_d\bm{u}_k)
  \Bigr] = 0,
\end{equation}
where $P \in \R^{4\times4}$ summarises the optimal future cost:
$\bm{e}_{k+1}^\top P\bm{e}_{k+1}$ is the best achievable cost
from step $k+1$ onwards when starting from $\bm{e}_{k+1}$.
Computing the derivative and solving for $\bm{u}_k^\star$:
\begin{align}
  2R\bm{u}_k + 2B_d^\top P(A_d\bm{e}_k + B_d\bm{u}_k) &= 0 \notag\\
  (R + B_d^\top P B_d)\bm{u}_k &= -B_d^\top P A_d\bm{e}_k \notag\\
  \bm{u}_k^\star &= -\underbrace{(R + B_d^\top P B_d)^{-1}
                    B_d^\top P A_d}_{K}\,\bm{e}_k.
\end{align}
This gives the optimal gain directly:
\begin{equation}
  \label{eq:K}
  K = (R + B_d^\top P B_d)^{-1} B_d^\top P A_d \in \R^{2\times4}.
\end{equation}
The matrix $P$ is determined by a self-consistency argument.
We assumed that the optimal cost from state $\bm{e}_k$ has the
quadratic form $\bm{e}_k^\top P\bm{e}_k$. We split the total
cost into the cost at step $k$ and the cost from step $k+1$
onwards:
\begin{equation}
  J_\infty
  = \underbrace{\bm{e}_k^\top Q\bm{e}_k
    + \bm{u}_k^\top R\bm{u}_k}_{\text{cost at step }k}
  + \underbrace{\sum_{j=k+1}^{\infty}
    \bigl(\bm{e}_j^\top Q\bm{e}_j + \bm{u}_j^\top R\bm{u}_j\bigr)}_
    {=\;\bm{e}_{k+1}^\top P\bm{e}_{k+1}
     \text{ by assumption}}.
\end{equation}

Substituting the optimal feedback $\bm{u}_k^\star = -K\bm{e}_k$
into the first term:
\begin{equation}
  \bm{u}_k^\top R\bm{u}_k
  = (-K\bm{e}_k)^\top R(-K\bm{e}_k)
  = \bm{e}_k^\top K^\top R K\bm{e}_k.
\end{equation}
Substituting $\bm{e}_{k+1} = A_{cl}\bm{e}_k$ into the second term:
\begin{equation}
  \bm{e}_{k+1}^\top P\bm{e}_{k+1}
  = (A_{cl}\bm{e}_k)^\top P(A_{cl}\bm{e}_k)
  = \bm{e}_k^\top A_{cl}^\top P A_{cl}\bm{e}_k.
\end{equation}
The total cost from step $k$ is therefore:
\begin{equation}
  J_\infty
  = \bm{e}_k^\top
    \bigl(Q + K^\top R K + A_{cl}^\top P A_{cl}\bigr)
    \bm{e}_k.
\end{equation}
For this to equal $\bm{e}_k^\top P\bm{e}_k$ for all $\bm{e}_k$,
the matrix inside the parentheses must equal $P$:
\begin{equation}
  P = Q + K^\top R K + A_{cl}^\top P A_{cl}.
\end{equation}
Substituting $K = (R + B_d^\top P B_d)^{-1}B_d^\top P A_d$ and
$A_{cl} = A_d - B_dK$ and simplifying yields the equation that
$P$ must satisfy, known as the Discrete Algebraic Riccati Equation:
\begin{equation}
  \label{eq:DARE}
  P = Q + A_d^\top P A_d
    - A_d^\top P B_d\,(R + B_d^\top P B_d)^{-1}B_d^\top P A_d,
\end{equation}
solved once offline using \texttt{scipy.linalg.solve\_discrete\_are}.

\begin{proposition}[Decoupling of nominal and error dynamics]
Under the control law \eqref{eq:tube_control}, the nominal state $\bm{z}_k$ and
the error $\bm{e}_k$ evolve independently:
\begin{align}
  \bm{z}_{k+1} &= A_d\bm{z}_k + B_d\bm{v}_k, \label{eq:nom_dyn}\\
  \bm{e}_{k+1} &= (A_d - B_dK)\bm{e}_k + \bm{\xi}_k. \label{eq:err_dyn}
\end{align}
\end{proposition}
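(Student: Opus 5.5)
The argument is a direct substitution, and the only real content is fixing what the nominal state $\bm{z}_k$ means. I would take \eqref{eq:nom_dyn} as the \emph{definition} of the nominal trajectory. Tube MPC propagates $\bm{z}_k$ by the disturbance-free model \eqref{eq:CW_disc}, driven by the planned input $\bm{v}_k$ and started from some $\bm{z}_0$. With that convention, \eqref{eq:nom_dyn} holds by construction, and $\bm{z}_k$ depends only on $\bm{z}_0$ and the planned inputs. It does not depend on the disturbances $\bm{\xi}_k$ or on the error.

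For \eqref{eq:err_dyn}, I would start from the perturbed dynamics $\bm{x}_{k+1} = A_d\bm{x}_k + B_d\bm{u}_k + \bm{\xi}_k$ and write
\[
\bm{e}_{k+1} = \bm{x}_{k+1} - \bm{z}_{k+1}.
\]
Substituting the perturbed dynamics for $\bm{x}_{k+1}$ and \eqref{eq:nom_dyn} for $\bm{z}_{k+1}$ gives
\[
\bm{e}_{k+1} = A_d(\bm{x}_k - \bm{z}_k) + B_d(\bm{u}_k - \bm{v}_k) + \bm{\xi}_k.
\]
Inserting the control law \eqref{eq:tube_control}, $\bm{u}_k - \bm{v}_k = -K\bm{e}_k$, and using $\bm{x}_k - \bm{z}_k = \bm{e}_k$, this becomes
\[
\bm{e}_{k+1} = (A_d - B_dK)\bm{e}_k + \bm{\xi}_k,
\]
which is exactly \eqref{eq:err_dyn}.

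Finally I would comment on what ``independently'' means. The $\bm{z}$-recursion contains no $\bm{e}$ and no $\bm{\xi}$, and the $\bm{e}$-recursion contains no $\bm{z}$ and no $\bm{v}$. The nominal input $\bm{v}_k$ cancels out of the error dynamics precisely because the feedback acts on $\bm{e}_k$ rather than on $\bm{x}_k$. The error is therefore driven only by its initial value $\bm{e}_0 = \bm{x}_0 - \bm{z}_0$ and the disturbance sequence, through $A_{cl} = A_d - B_dK$.

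The main obstacle is conceptual rather than computational. One must make explicit that $\bm{z}_k$ is generated by \eqref{eq:nom_dyn} and is not, for instance, reset to the measured state at each step. If it were reset, the decoupling would be trivial or meaningless. I would state this convention clearly at the start of the proof.
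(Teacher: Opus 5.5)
Your proof is correct and is essentially the paper's argument: both substitute $\bm{x}_k = \bm{z}_k + \bm{e}_k$ and $\bm{u}_k = \bm{v}_k - K\bm{e}_k$ into the perturbed dynamics $\bm{x}_{k+1} = A_d\bm{x}_k + B_d\bm{u}_k + \bm{\xi}_k$ and separate the terms. You make explicit that $\bm{z}_{k+1}$ is defined by the nominal recursion (step iv of the Tube MPC algorithm) and that $\bm{e}_{k+1} = \bm{x}_{k+1} - \bm{z}_{k+1}$, which is exactly what justifies the paper's more informal step of ``reading off'' the two summands, since a sum by itself does not determine how it splits.
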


\begin{proof}
Substitute $\bm{x}_k = \bm{z}_k + \bm{e}_k$ and
$\bm{u}_k = \bm{v}_k - K\bm{e}_k$ into
$\bm{x}_{k+1} = A_d\bm{x}_k + B_d\bm{u}_k + \bm{\xi}_k$:
\begin{align*}
  \bm{x}_{k+1}
  &= A_d(\bm{z}_k + \bm{e}_k) + B_d(\bm{v}_k - K\bm{e}_k) + \bm{\xi}_k \\
  &= \underbrace{(A_d\bm{z}_k + B_d\bm{v}_k)}_{\bm{z}_{k+1}}
   + \underbrace{(A_d - B_dK)\bm{e}_k + \bm{\xi}_k}_{\bm{e}_{k+1}}.
\end{align*}
Reading off the two terms gives \eqref{eq:nom_dyn} and \eqref{eq:err_dyn}.
\end{proof}

Since $A_{cl} = A_d - B_dK$ is stable, the error $\bm{e}_k$
remains bounded regardless of the disturbances. More precisely,
there exists a bounded set $\mathcal{Z} \subset \R^4$, called the
\textbf{Robust Positively Invariant (RPI) set}
\cite{Rakovic2005RPI}, such that if
$\bm{e}_0 \in \mathcal{Z}$, then $\bm{e}_k \in \mathcal{Z}$ for
all $k \geq 0$, whatever disturbances $\bm{\xi}_k \in \mathcal{W}$
occur. Intuitively, $\mathcal{Z}$ is the worst-case tube around
the nominal trajectory that the error will never leave. Its radius
$\bar{z}$ is computed once offline.

Since $\bm{x}_k = \bm{z}_k + \bm{e}_k$ and $\bm{e}_k$ is at most
$\bar{z}$ in every direction, keeping the true state inside the
corridor $\|\bm{x}_k - \bm{x}_k^{\rm ref}\| \leq \varepsilon$
for all possible disturbances reduces to keeping the nominal state
inside a \textbf{tightened corridor} of half-width
$\varepsilon - \bar{z}$:
\begin{equation}
  \label{eq:tightened}
  \|\bm{z}_k - \bm{x}_k^{\rm ref}\| \leq \varepsilon - \bar{z}.
\end{equation}
Similarly, the nominal control is restricted to a tightened input
set $\tilde{\mathcal{U}}$ of radius $u_{\rm max} - \bar{u}$, where
$\bar{u} = \|K\|_\infty \bar{z}$ accounts for the worst-case
correction $K\bm{e}_k$.

The \textbf{Tube MPC algorithm} then proceeds as follows at each
step $k$:
\begin{enumerate}[label=\roman*)]
  \item Measure the true state $\bm{x}_k$ and compute the current
        error $\bm{e}_k = \bm{x}_k - \bm{z}_k$.
  \item Solve the nominal MPC optimisation \eqref{eq:qp} with the
        tightened corridor \eqref{eq:tightened} and tightened input
        set $\tilde{\mathcal{U}}$, to obtain the nominal control
        sequence $\{v_k, \ldots, v_{k+N-1}\}$.
  \item Apply the total control
        $\bm{u}_k = \bm{v}_k - K\bm{e}_k$ to the spacecraft.
  \item Advance the nominal state:
        $\bm{z}_{k+1} = A_d\bm{z}_k + B_d\bm{v}_k$.
\end{enumerate}
The tightening \eqref{eq:tightened} is computed once offline.
The online cost per step is identical to Linear MPC: same matrix
$H$, same ADMM solver. The formal corridor guarantee
$\|\bm{x}_k - \bm{x}_k^{\rm ref}\| \leq \varepsilon$ holds for
all realisations of the disturbance, at no additional computational
cost. Specht, Bishnoi and Lampariello \cite{Specht2023} applied
Tube MPC to rendezvous with freely tumbling targets. Oestreich,
Linares and Gondhalekar \cite{Oestreich2023} proposed an adaptive
variant that identifies $\mathcal{W}$ online from observed residuals,
and Tranos, Russo and Proutiere \cite{Tranos2022} proposed a related
self-tuning approach that adapts the tube itself online via
least-squares estimation.

While this work restricts to the linear CWH dynamics, the tube
framework extends naturally to nonlinear systems
\cite{MayneKerrigan2007,MayneKerrigan2011}, where the fixed gain $K$
is replaced by an ancillary nonlinear MPC controller enforcing the
trajectory to remain within a tube around a nominal reference. Recent
extensions replace the linear model by a globally linear Koopman
representation to handle nonlinear dynamics within the same tube
formulation \cite{Zhang2022Koopman}, and extend the framework to
systems with input delay \cite{Zhou2024Delay}, directly relevant to
the communication delays discussed in the introduction. Recent
extensions also address more structured uncertainty descriptions.
In particular, tube-based guaranteed-cost MPC has been proposed for linear
systems with parametric uncertainties \cite{Massera2020}, while scalable tube
MPC constructions based on ellipsoidal sets provide a computationally efficient
alternative for high-dimensional uncertain linear systems \cite{Parsi2022}.
Tutorial expositions of tube-based robust MPC can be found in
\cite{Herceg2015,Rakovic2008}, while simple output-feedback tube-MPC
variants have also been proposed \cite{Lorenzetti2019}.

\subsection{Fast and Embedded MPC}

Spacecraft guidance computers are slow and cannot run expensive matrix
factorisations at each guidance step. Fast MPC avoids this by using first-order
methods: algorithms that only require matrix-vector products of the form
$H\bm{v}$, with no factorisation. This philosophy is central to real-time and
embedded MPC, where the objective is not only to solve the QP accurately, but
to do so within a certified and very small computation budget
\cite{Richter2009,Richter2012}.

The gradient of the objective
$J(\bm{v}) = \tfrac{1}{2}\bm{v}^\top H\bm{v} + \bm{f}^\top\bm{v}$ is
$\nabla J(\bm{v}) = H\bm{v} + \bm{f}$, with Lipschitz
constant $L = \lambda_{\max}(H)$. The projected gradient step is
\begin{equation}
  \label{eq:grad_step}
  \bm{v}^{(j+1)}
  = \Pi_{\mathcal{U}}\!\left(
      \bm{v}^{(j)} - \frac{1}{L}(H\bm{v}^{(j)} + \bm{f})
    \right),
\end{equation}
where $\Pi_{\mathcal{U}}$ is the projection onto $\mathcal{U}$: for a box
constraint, this is simply an elementwise clip to $[-u_{\max}, u_{\max}]$.

Nesterov acceleration replaces the step from $\bm{v}^{(j)}$ by a step from an
extrapolated point,
\begin{equation}
  \bm{y}^{(j)} = \bm{v}^{(j)}
  + \frac{j-1}{j+2}\bigl(\bm{v}^{(j)} - \bm{v}^{(j-1)}\bigr),
\end{equation}
lifting the convergence rate from $O(1/j)$ to $O(1/j^2)$. For $H \succ 0$,
the suboptimality gap $J(\bm{v}^{(j)}) - J^\star$ is multiplied at each
iteration by the factor
\begin{equation}
  \rho = \frac{\sqrt{\kappa(H)}-1}{\sqrt{\kappa(H)}+1} < 1.
\end{equation}
The number of iterations to reach suboptimality $\delta$ satisfies
\cite{Richter2012}:
\begin{equation}
  j \leq
  \left\lceil
    \sqrt{\kappa(H)}\;\ln\!\frac{2\,J(\bm{v}^{(0)})}{\delta}
  \right\rceil.
\end{equation}
This bound is computable before the mission, guaranteeing that the solver
finishes within a fixed number of matrix-vector products regardless of the
specific problem instance.

Bashnick and Ulrich \cite{Bashnick2023} demonstrated sub-10\,ms solve times
for a rendezvous QP on an embedded processor. Breger and How
\cite{Breger2008} combined a fast LP solver with passive safety constraints,
guaranteeing collision avoidance even in the event of a complete thruster
failure. Beyond the basic accelerated projected-gradient method, several fast
variants have been proposed for real-time MPC, including dual fast-gradient
schemes for distributed and embedded implementations
\cite{Giselsson2013a,Giselsson2013b,Ferranti2015} and proximal-gradient
formulations tailored to constrained predictive control \cite{Patrinos2016}.

Related fast first-order schemes have also been developed for MPC problems with
both input-rate and amplitude constraints \cite{Kempf2020}. At the hardware
level, tailored embedded optimisation pipelines can reach extremely high update
rates, up to the megahertz range in dedicated implementations
\cite{Jerez2014,Frison2016}. For explicit and parametric MPC implementations,
software tools such as the Multi-Parametric Toolbox 3.0 provide a standard
benchmark environment for controller synthesis and deployment
\cite{Herceg2013}.

\subsection{Successive Convexification}

For highly elliptic orbits, large out-of-plane motion, or problems with
non-convex thrust constraints such as $\norm{\bm{u}} \geq u_{\min}$, the true
dynamics are nonlinear: $\bm{x}_{k+1} = f(\bm{x}_k, \bm{u}_k)$. SCvx
\cite{Szmuk2025} solves a sequence of convex subproblems, each built from a
local linear approximation of $f$.

At outer iteration $j$, define the deviations
\begin{equation}
  \delta\bm{x}_k = \bm{x}_k - \bm{x}_k^{(j)} \in \R^4,
  \qquad
  \delta\bm{u}_k = \bm{u}_k - \bm{u}_k^{(j)} \in \R^2,
\end{equation}
and linearise $f$ around the current trajectory estimate:
\begin{equation}
  \label{eq:scvx_lin}
  f(\bm{x}_k, \bm{u}_k)
  \approx f(\bm{x}_k^{(j)}, \bm{u}_k^{(j)})
  + A_k^{(j)}\,\delta\bm{x}_k
  + B_k^{(j)}\,\delta\bm{u}_k,
\end{equation}
where
$A_k^{(j)} = \partial f/\partial\bm{x}\big|_{(\bm{x}_k^{(j)},\bm{u}_k^{(j)})}
\in \R^{4\times4}$ and
$B_k^{(j)} = \partial f/\partial\bm{u}\big|_{(\bm{x}_k^{(j)},\bm{u}_k^{(j)})}
\in \R^{4\times2}$ are the Jacobians of $f$ at the current guess. A trust-region
penalty
\begin{equation}
  \eta\bigl(\norm{\delta\bm{x}_k}^2 + \norm{\delta\bm{u}_k}^2\bigr),
  \qquad \eta > 0,
\end{equation}
is added to the subproblem to prevent the solution from moving too far from
the linearisation point. The outer loop proceeds as follows:
\begin{enumerate}[label=\roman*)]
  \item linearise $f$ around the current trajectory to obtain
        $A_k^{(j)}$, $B_k^{(j)}$;
  \item solve the convex subproblem with linearised dynamics \eqref{eq:scvx_lin}
        and trust-region penalty;
  \item set the solution as the new trajectory estimate;
  \item update $\eta$ based on the ratio of actual to predicted cost reduction.
\end{enumerate}

When $f(\bm{x},\bm{u}) = A_d\bm{x}+B_d\bm{u}$ (the CWH case), the Jacobians
are constant and exact, so SCvx converges in a single outer iteration and gives
exactly the same solution as Linear MPC. Its benefit appears only when $f$ is
genuinely nonlinear. SCvx was originally developed for planetary
powered-descent guidance \cite{SzmukAcikmese2018}, before being extended
to rendezvous problems. Szmuk, Acikmese and Reynolds \cite{Szmuk2025}
applied SCvx to rendezvous on near-rectilinear halo orbits, while
Cavenago et al.\ \cite{Cavenago2019} used sequential convex programming for
multi-mode low-thrust spacecraft trajectory optimisation.

More broadly, SCvx belongs to a wider family of sequential convexification and
computationally tractable robust nonlinear MPC methods that seek to preserve the
structure of convex optimisation while handling genuinely nonlinear dynamics and
constraints. A related line of work formulates robust nonlinear MPC via
difference-of-convex decompositions and sequential convex programming
\cite{DoffSotta2026}, conceptually close to the successive convexification
described above.
\section{Numerical Solvers}

\subsection{Overview}

All four MPC families rely on the same predictive-control model but use two
different low-level solvers:
\begin{enumerate}[label=\roman*)]
  \item ADMM with Riccati recursion, for Linear MPC and Tube MPC;
  \item Nesterov accelerated projected gradient, for Fast MPC and the inner
        loop of SCvx.
\end{enumerate}

This separation reflects a broader distinction in numerical MPC between
operator-splitting and augmented-Lagrangian methods on the one hand, and
first-order accelerated gradient methods on the other
\cite{Boyd2011,Richter2012,Rawlings2017}. Classical interior-point methods
remain an important benchmark for medium-scale MPC problems
\cite{Rao1998}, but the present work focuses on methods better aligned with
embedded implementation and repeated online solution.

\begin{table}[H]
\centering
\caption{Numerical solver used for each MPC family.}
\vspace{2mm}
\renewcommand{\arraystretch}{1.4}
\small
\begin{tabular}{p{2.7cm}p{4.7cm}p{4.7cm}p{2.8cm}}
\hline
\textbf{MPC family} & \textbf{Problem solved online}
  & \textbf{Numerical solver} & \textbf{Main idea} \\
\hline
Linear MPC
  & QP with linear dynamics and input bounds
  & ADMM, with the quadratic subproblem solved by backward Riccati recursion
  & Split the quadratic minimisation and the bound enforcement \\
\hline
Tube MPC
  & Tightened nominal QP, plus the real-time correction
    $\bm{u}_k = \bm{v}_k - K\bm{e}_k$
  & Same ADMM solver, applied to the tightened nominal problem
  & Same core as Linear MPC, robustness added by tightening \\
\hline
Fast MPC
  & Condensed QP in the stacked control vector $\bm{v}$
  & Nesterov accelerated projected gradient
  & Matrix-vector products and box projection only \\
\hline
SCvx
  & Sequence of convexified subproblems
  & Outer SCvx loop; each subproblem solved by the Nesterov solver
  & Linearise, solve, update, and repeat \\
\hline
\end{tabular}
\end{table}

\subsection{ADMM for Linear MPC and Tube MPC}

Consider the condensed QP solved at one MPC step:
\begin{equation}
  \label{eq:qp_admm}
  \min_{\bm{v}}\;\tfrac{1}{2}\bm{v}^\top H\bm{v} + \bm{f}^\top\bm{v}
  \quad\text{subject to}\quad G\bm{v} \leq \bm{g},
\end{equation}
This problem has two parts: a quadratic cost (easy to minimise with linear
algebra) and a box constraint (easy to enforce by clipping, but inconvenient
to include directly in the quadratic solve). ADMM separates these two tasks
by introducing an auxiliary variable $\bm{z}$ and rewriting the problem as
\begin{equation}
  \min_{\bm{v},\bm{z}}
  \; \frac{1}{2}\bm{v}^\top H\bm{v} + \bm{f}^\top\bm{v} + I_{\mathcal{U}}(\bm{z})
  \quad\text{subject to}\quad \bm{v} - \bm{z} = 0,
\end{equation}
where $I_{\mathcal{U}}(\bm{z}) = 0$ if $\bm{z} \in \mathcal{X}$ and
$+\infty$ otherwise. With a scaled dual variable $\bm{d}$ and penalty
$\rho > 0$, the iterations are
\begin{align}
  \bm{v}^{(m+1)}
  &= \arg\min_{\bm{v}}
  \!\left(
  \frac{1}{2}\bm{v}^\top H\bm{v} + \bm{f}^\top\bm{v}
  + \frac{\rho}{2}\norm{\bm{v} - \bm{z}^{(m)} + \bm{d}^{(m)}}^2
  \right), \label{eq:admm_v} \\
  \bm{z}^{(m+1)}
  &= \Pi_{\mathcal{U}}\!\left(\bm{v}^{(m+1)} + \bm{d}^{(m)}\right),
  \label{eq:admm_z} \\
  \bm{d}^{(m+1)}
  &= \bm{d}^{(m)} + \bm{v}^{(m+1)} - \bm{z}^{(m+1)}. \label{eq:admm_d}
\end{align}

The $\bm{v}$-update \eqref{eq:admm_v} is a quadratic minimisation
in $\bm{v}$ with no constraints. Since $H \succ 0$ and $\rho > 0$,
the objective is strictly convex and the minimum is unique. Setting
the gradient to zero:
\begin{equation}
  H\bm{v} + \bm{f}
  + \rho\bigl(\bm{v} - \bm{z}^{(m)} + \bm{d}^{(m)}\bigr) = 0.
\end{equation}
Rearranging gives the linear system:
\begin{equation}
  \label{eq:v_update}
  (H + \rho I)\,\bm{v}^{(m+1)}
  = -\bm{f} + \rho\bigl(\bm{z}^{(m)} - \bm{d}^{(m)}\bigr),
\end{equation}
where $H + \rho I \in \R^{2N \times 2N}$ is symmetric positive
definite and therefore invertible. The matrix $H + \rho I$ does
not depend on the current state $\hat{\bm{x}}_k$ or on the ADMM
iteration $m$: it is fixed for the entire mission and can be
factorised once offline (e.g.\ by Cholesky decomposition).
At each ADMM iteration, only the right-hand side
$-\bm{f} + \rho(\bm{z}^{(m)} - \bm{d}^{(m)})$ changes,
so \eqref{eq:v_update} is solved online by a single
triangular back-substitution, at cost $O((2N)^2)$. For large horizons $N$, a more efficient alternative is to exploit the block structure of $H$ and solve \eqref{eq:v_update} via a backward Riccati recursion, which reduces the cost to $O(N)$ operations of size $2 \times 2$ only \cite{Rawlings2017}.
\vspace{3mm}
\begin{remark}
\label{rem:admm_rho_choice}
The choice of $\rho$ is critical. To understand why, recall that
the $\bm{v}$-update reduces to solving the linear system
\eqref{eq:v_update} with matrix $H + \rho I$. This matrix has
eigenvalues $\lambda_i(H) + \rho$, and its condition number is:
\begin{equation}
  \kappa(H + \rho I)
  = \frac{\lambda_{\max}(H) + \rho}{\lambda_{\min}(H) + \rho}.
\end{equation}
The condition number measures how hard the system is to solve:
the larger it is, the more ADMM iterations are needed to converge.

If $\rho$ is too small, the penalty term $\rho I$ is negligible
compared to $H$, so $H + \rho I \approx H$. The ADMM iterations
are then almost unconstrained: the $\bm{v}$-update minimises the
cost freely, far from the box constraint, while the
$\bm{z}$-update projects back onto the box. The two copies
$\bm{v}$ and $\bm{z}$ disagree strongly at each iteration and
take many steps to converge to a common point.

If $\rho$ is too large, the penalty term $\rho I$ dominates
$H$, so the $\bm{v}$-update essentially minimises
$\frac{\rho}{2}\|\bm{v} - \bm{z}^{(m)} + \bm{d}^{(m)}\|^2$
and ignores the original cost. The two copies $\bm{v}$ and
$\bm{z}$ agree quickly, but they converge to a point that
satisfies the constraint while ignoring the cost: the solution
is feasible but far from optimal.

The right balance is when $\rho$ is of the same order as
$\lambda_{\max}(H)$, which is itself of order $\lambda_{\max}(Q)$
since $H = \mathcal{B}^\top\bar{Q}\mathcal{B} + \bar{R}$ and
$\bar{Q}$ is block-diagonal with $Q$ on the diagonal. At this
scale, the penalty is strong enough to enforce agreement between
$\bm{v}$ and $\bm{z}$, but not so strong that it overwhelms
the cost. In this implementation, we set
$\rho = \lambda_{\max}(Q)$, computed once offline.
\end{remark}

From a broader optimisation viewpoint, this splitting strategy is closely
related to the operator-splitting literature for structured convex programmes
\cite{Boyd2011}. In practical MPC software, closely related approaches underlie
modern QP solvers such as OSQP, which combine robustness, warm-starting and
sparse linear algebra particularly well for repeated online solution
\cite{Stellato2020}. Alternative augmented-Lagrangian implementations based on
coordinate descent have also been proposed for fast MPC
\cite{Wu2021}.

\subsection{Nesterov accelerated projected gradient for Fast MPC}

Fast MPC solves the same condensed QP but replaces ADMM by the Nesterov
accelerated projected-gradient scheme. Each iteration consists of three steps:
\begin{enumerate}[label=\roman*)]
  \item compute the extrapolated point
        $\bm{y}^{(j)} = \bm{v}^{(j)} + \beta_j(\bm{v}^{(j)} - \bm{v}^{(j-1)})$,
        with $\beta_j = (j-1)/(j+2)$;
  \item apply one gradient step:
        $\tilde{\bm{v}} = \bm{y}^{(j)} - \tfrac{1}{L}(H\bm{y}^{(j)} + \bm{f})$;
  \item project onto the box constraints:
        $\bm{v}^{(j+1)} = \Pi_{\mathcal{U}}(\tilde{\bm{v}})$.
\end{enumerate}
Each iteration requires only one matrix-vector product $H\bm{y}^{(j)}$ and
one elementwise clip, with no matrix factorisation. The momentum coefficient
$\beta_j$ has a simple interpretation: if the last two iterates moved in
roughly the same direction, it is reasonable to continue a bit further before
computing the next correction.

SCvx adds an outer loop around the same fast inner solver. At each outer
iteration the nonlinear dynamics are linearised, a convex subproblem is formed
and solved by the Nesterov routine, and the trajectory is updated. SCvx thus
does not introduce a different low-level solver: it adds an outer convexification
loop on top of the same fast optimisation core.

The practical appeal of this family of methods lies in its extremely small
per-iteration cost: one matrix-vector product and one projection. This makes
it attractive for hardware-constrained implementations, especially when the
iteration budget can be certified offline \cite{Richter2009,Richter2012}.
Related implementations and algorithmic refinements for fast MPC are discussed
in \cite{Patrinos2016,Frison2016,Jerez2014}.

\section{Numerical Results}
\label{sec:num_results}
\subsection{Test problem}

\subsubsection*{Problem formulation}

We consider a planar orbital rendezvous in the LVLH frame. The chaser starts
at a relative distance of 15\,km from the target and must reach 1\,km, while
remaining inside a safety corridor of half-width $\varepsilon$ around a
prescribed reference trajectory.

The relative motion follows the CWH equations introduced in Section~2. The
state vector is
\[
\bm{x}_k =
(\delta x_k,\; \delta y_k,\; \delta\dot{x}_k,\; \delta\dot{y}_k)^\top
\in \R^4,
\]
where $\delta x_k$ and $\delta y_k$ are the radial and along-track separations
(in metres), and $\delta\dot{x}_k$, $\delta\dot{y}_k$ are the corresponding
relative velocities (in m/s). The control input $u_k = (u_{x,k}, u_{y,k})^\top \in \mathbb{R}^2$ is the thrust
acceleration vector (in m/s\textsuperscript{2}), left unconstrained in this
study (no bound $u_{max}$ is imposed on the commanded thrust). This choice is
deliberate: removing the actuator saturation isolates the effect of the
ill-conditioning discussed in Remark~\ref{rem:kappa_growth} from the effect of input clipping, so
that the performance gap observed between the four MPC families in
Section~5.2 can be attributed unambiguously to the conditioning of $H$ rather
than to differences in how each algorithm handles a bound that is not there. The corridor constraint
requires the chaser to stay close to the reference $\bm{x}_k^{\rm ref}$ at
every step:
\begin{equation}
  \|\bm{x}_k - \bm{x}_k^{\rm ref}\| \leq \varepsilon.
\end{equation}
Disturbances $\bm{\xi}_k \in \mathcal{W}$ model atmospheric drag and actuation
errors. Position and velocity components are bounded separately:
$|\xi_{k,\mathrm{pos}}| \leq \bar{w}_{\mathrm{pos}}$ and
$|\xi_{k,\mathrm{vel}}| \leq \bar{w}_{\mathrm{vel}}$ at each step, with
$\bar{w}_{\mathrm{pos}} = 10$\,m and $\bar{w}_{\mathrm{vel}} = 1$\,m/s.

The continuous-time CWH model is discretised with a zero-order hold and
sampling period $T_s = 10$\,s. This gives the discrete system
\begin{equation}
  \bm{x}_{k+1} = A_d\,\bm{x}_k + B_d\,\bm{u}_k + \bm{\xi}_k,
\end{equation}
with $A_d = e^{A_c T_s} \in \R^{4\times4}$ and
$B_d = \int_0^{T_s}e^{A_c\tau}B_c\,\mathrm{d}\tau \in \R^{4\times2}$,
computed once offline via \texttt{scipy.linalg.expm} at orbital rate
$n = 1.107\times10^{-3}$\,rad/s (500\,km altitude).

The reference trajectory is a straight line in the along-track direction from
$\delta y_0 = 15{,}000$\,m to $\delta y_f = 1{,}000$\,m at constant velocity,
with zero radial displacement. The chaser starts exactly on the reference:
$\bm{x}_0 = \bm{x}_0^{\rm ref}$.

\subsubsection*{MPC parameters and cost}
The quadratic cost uses
\[
Q = 3\times10^3\,\mathrm{diag}(100,100,1,1) \in \R^{4\times4},
\qquad
R = 100\,I_2 \in \R^{2\times2},
\qquad
P = Q.
\]
The matrix $Q$ penalises position errors 100 times more than velocity errors,
reflecting the priority on spatial accuracy. The matrix $R$ penalises thrust
usage uniformly in both directions. The terminal weight $P$ is set equal to
$Q$ as a practical approximation; in the Tube MPC experiments, $P$ is replaced
by the exact DARE solution \eqref{eq:DARE}.

The prediction horizon is $N = 100$ steps, corresponding to 1000\,s of
look-ahead. This much longer horizon than the original benchmark is
deliberately used as a stress test of the conditioning discussed in
Remark~\ref{rem:kappa_growth}: with $T_s = 10$\,s and $N = 100$, the Hessian satisfies $\kappa(H) \approx 3.2\times10^{11}$ — several orders of magnitude worse
than the $\kappa(H) \approx 10^4$--$10^7$ range considered previously.
This extreme conditioning is expected to strongly penalise the Nesterov-based
Fast MPC solver while leaving the exact Cholesky-based solvers (Linear MPC,
Tube MPC, SCvx) unaffected, as confirmed below.

The safety corridor has half-width $\varepsilon = 100$\,m around the
reference trajectory at every step.

For each MPC family, we report the average computation time per step (ms),
the final tracking error, the maximum tracking error, and a qualitative
estimate of the online memory footprint.

\subsection{Comparison results}

Table~\ref{tab:comparison15km1km} summarises the performance of the four
algorithms on the 15\,km to 1\,km rendezvous benchmark. We discuss the
behaviour of each method in turn, supported by the convergence curves shown
in Figure~\ref{fig:convergence}.

\begin{table}[H]
\centering
\caption{Comparison of the four MPC families on the 15\,km to 1\,km
rendezvous test, with $N=100$, $T_s=10$\,s, $\varepsilon=100$\,m,
$\bar w_{\mathrm{pos}}=10$\,m, $\bar w_{\mathrm{vel}}=1$\,m/s, and no
bound on the thrust command.}
\label{tab:comparison15km1km}
\vspace{4mm}
\renewcommand{\arraystretch}{1.2}
\small
\vspace{5mm}
\begin{tabular}{p{2.4cm} p{2.1cm} p{1.8cm} p{1.8cm}}
\hline
\textbf{Algorithm} & \textbf{Time / step}
  & \textbf{Final error} & \textbf{Max error} \\
\hline
Linear MPC & 2.42\,ms  & 9.357\,m   & 13.705\,m \\
Tube MPC   & 2.53\,ms  & 12.655\,m  & 15.122\,m \\
Fast MPC   & 2.60\,ms  & 662.896\,m & 662.896\,m \\
SCvx       & 7.29\,ms  & 11.709\,m  & 14.297\,m \\
\hline
\end{tabular}
\end{table}

\begin{figure}[H]
\centering
\includegraphics[width=0.85\textwidth]{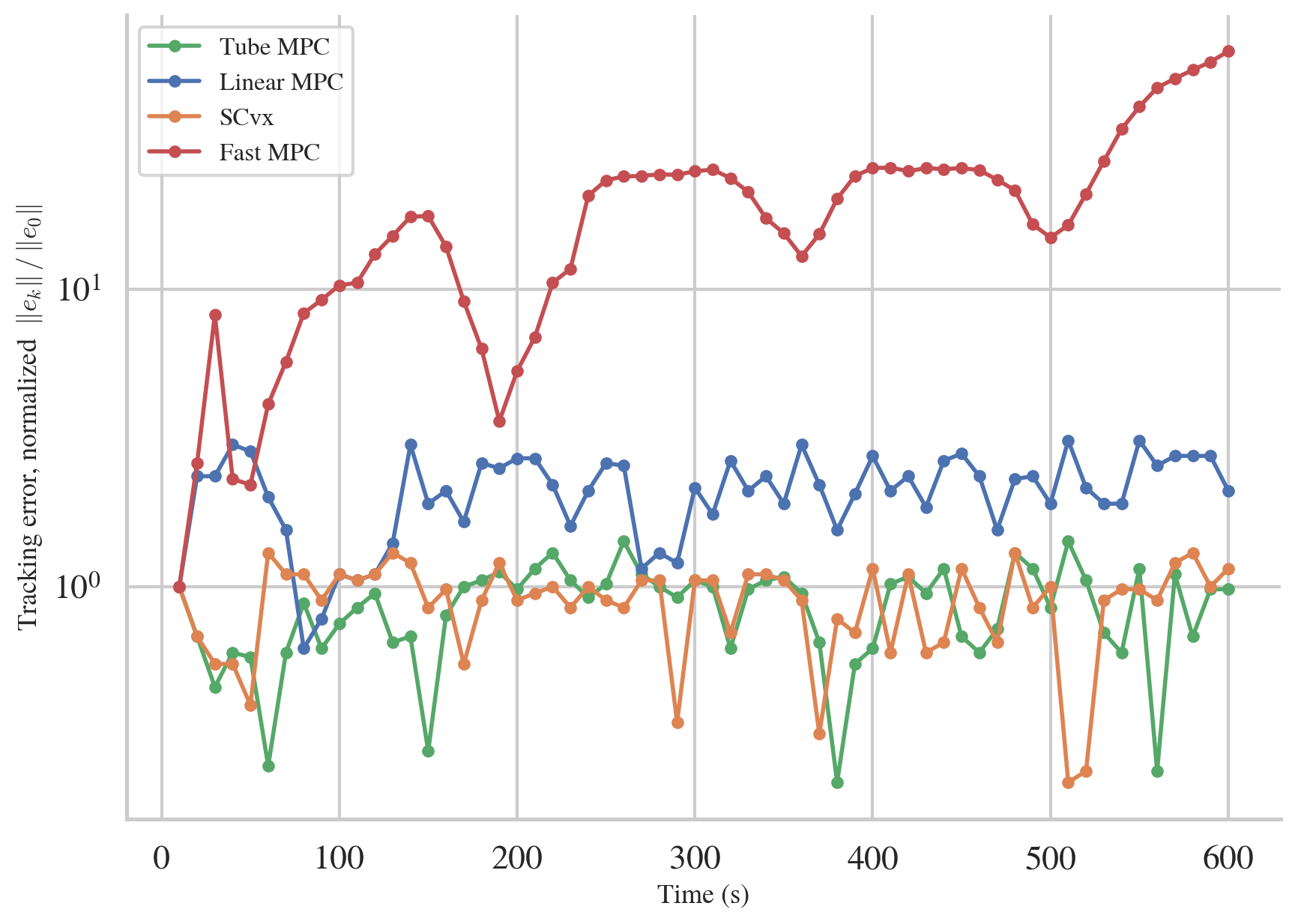}
\caption{Tracking error normalised by the initial value $e_0$, on a log
scale, over the 60-step simulation ($T_s=10$\,s), for the four MPC
families under the new stress-test parameters ($N=100$,
$\varepsilon=100$\,m, $\bar w_{\mathrm{pos}}=10$\,m,
$\bar w_{\mathrm{vel}}=1$\,m/s, no thrust bound).}
\label{fig:convergence}
\end{figure}

\textbf{Linear MPC, Tube MPC and SCvx} all achieve comparable, low tracking
errors (9.4\,m, 12.7\,m and 11.7\,m respectively), well inside the 100\,m
corridor. Their computation time per step remains low (2.4--7.3\,ms), since
all three rely on an exact linear solve (Cholesky factorisation of the
condensed Hessian) that is insensitive to the conditioning of $H$. Contrary
to the original benchmark, Tube MPC does not markedly outperform Linear
MPC here: without a bound on the thrust command, the LQR correction
$K\bm{e}_k$ is applied without saturation, which removes part of the
distinctive advantage that the tightened-corridor formulation had under a
bounded actuator.

\textbf{Fast MPC} collapses under this stress test, with a final and
maximum error of 662.9\,m — more than six times the corridor half-width.
As Figure~\ref{fig:convergence} shows, its normalised error grows by more
than two orders of magnitude instead of decaying, in sharp contrast with
the other three families, which remain close to their initial error
throughout the simulation.

\vspace{2mm}
\begin{remark}[Why does Fast MPC diverge]
\label{rem:fast_mpc_diverge}
The CWH dynamics are marginally stable, meaning the eigenvalues of $A_d$
have modulus close to 1 and therefore never decay. As the prediction
horizon $N$ grows, the condensed Hessian $H$ accumulates more and more
of these non-decaying terms. This pushes $\lambda_{\max}(H)$ upward,
while $\lambda_{\min}(H)$, which is essentially set by the control
weight $R$, stays nearly constant. As a result, the condition number
$\kappa(H) = \lambda_{\max}(H)/\lambda_{\min}(H)$ grows sharply with
$N$, following the same trend already noted in Remark~\ref{rem:kappa_growth}: $\kappa(H) \approx 10^4$ at $N=5$, $\kappa(H) \approx 10^7$ at $N=40$, and here
$\kappa(H) \approx 1.4\times10^{13}$ at $N=100$. A larger condition
number slows the convergence rate of the Nesterov solver, so its fixed
budget of 100 iterations is no longer sufficient to reach an accurate
solution. Reducing the sampling period $T_s$ helps mitigate this
effect, since it limits how much the dynamics evolve at each step and
therefore slows the growth of $\kappa(H)$ with $N$. At $N=100$,
lowering $T_s$ from 100\,s to 10\,s brings the condition number down to
$\kappa(H) \approx 3.2\times10^{11}$, and Fast MPC's tracking error
improves accordingly, though it remains far from acceptable.
\end{remark}

\section{Standard Orbital Rendezvous Manoeuvres}

\subsection{Tube MPC recall and problem formulation}
\label{sec:tube_mpc_recall}

We briefly recall the Tube MPC framework introduced in Section~3.3, now
applied to a three-dimensional CWH model. The state vector is
\[
x_k = (\delta x_k,\; \delta y_k,\; \delta z_k,\;
        \delta\dot{x}_k,\; \delta\dot{y}_k,\; \delta\dot{z}_k)^\top
\in \R^6,
\]
where $\delta x_k$, $\delta y_k$, $\delta z_k$ are the radial, along-track
and cross-track separations in metres, and $\delta\dot{x}_k$,
$\delta\dot{y}_k$, $\delta\dot{z}_k$ are the corresponding relative
velocities in m/s. The control input is the thrust acceleration
$u_k = (u_x, u_y, u_z)^\top \in \R^3$ (m/s$^2$), left \textbf{unconstrained}
in this study. The discrete-time dynamics follow the three-dimensional CWH
model \cite{Clohessy1960,Fehse2003}:
\begin{equation}
\label{eq:dyn3d}
x_{k+1} = A_d x_k + B_d u_k + \xi_k, \qquad \xi_k \sim \mathcal{U}(\mathcal{W}),
\end{equation}
where $A_d = e^{A_c T_s} \in \R^{6\times6}$ is the discrete state-transition
matrix (how the satellite evolves naturally in one step without thrust),
$B_d = \int_0^{T_s} e^{A_c\tau} B_c\,\mathrm{d}\tau \in \R^{6\times3}$
is the input matrix (the effect of a constant thrust pulse on the state),
and $\xi_k$ is the disturbance at step $k$, modelling atmospheric drag,
solar radiation pressure, and actuation errors. The disturbances are drawn
independently and uniformly from the bounded set
$\mathcal{W} = [-\bar{w}, \bar{w}]^6$ with
\[
\bar{w} = (10,\; 10,\; 10,\; 1,\; 1,\; 1)^\top,
\]
where the first three components (in m) bound the position perturbations
and the last three (in m/s) bound the velocity perturbations. Both matrices
$A_d$ and $B_d$ are computed once offline via matrix exponentiation and
loaded into the onboard computer before the mission.

At each step $k$, the nominal MPC solves the following quadratic programme
\cite{Mayne2000,Rawlings2017}:
\begin{equation}
\label{eq:qp3d}
\min_{v_0, \ldots, v_{N-1}} \sum_{j=1}^{N}
(x_{k+j} - x_{k+j}^{\rm ref})^\top Q\, (x_{k+j} - x_{k+j}^{\rm ref})
+ v_j^\top R\, v_j
\end{equation}
subject to the nominal dynamics $x_{k+j+1} = A_d x_{k+j} + B_d v_j$, with
\textbf{no bound imposed on $v_j$}, where:
\begin{itemize}
  \item $Q = 3\times10^3\,\mathrm{diag}(100,100,100,1,1,1) \in \R^{6\times6}$
        is the state weight matrix, penalising position errors 100 times
        more than velocity errors, reflecting the priority on spatial accuracy;
  \item $R = 100\,I_3 \in \R^{3\times3}$ is the control weight matrix,
        penalising thrust usage uniformly in all three directions;
  \item $N = 100$ is the prediction horizon (1000\,s of look-ahead);
  \item $T_s = 10$\,s is the sampling period;
  \item $n = 1.107\times10^{-3}$\,rad/s is the mean orbital rate
        at 500\,km altitude.
\end{itemize}
The terminal weight $P \succeq 0$ is the solution of the Discrete
Algebraic Riccati Equation (DARE) associated with $(A_d, B_d, Q, R)$,
and the LQR gain
$K = (R + B_d^\top P B_d)^{-1} B_d^\top P A_d \in \R^{3\times6}$
is computed once offline \cite{Mayne2005,Rawlings2017}. The applied
control is
\begin{equation}
u_k = v_k^\star - K e_k, \qquad e_k = x_k - x_k^{\rm ref},
\end{equation}
where $v_k^\star$ is the first element of the optimal sequence from
\eqref{eq:qp3d} and $-K e_k$ is the real-time LQR correction that
actively counteracts the disturbance $\xi_k$ between MPC steps.

Two solvers are compared throughout this section \cite{Boyd2011,Richter2012}.
\textbf{ADMM} solves the condensed QP via an offline Cholesky factorisation
of the (unconstrained, since no thrust bound is imposed) Hessian $H$; the
absence of an input bound reduces the usual ADMM iteration to a single
triangular back-substitution per step, which remains immune to the
ill-conditioning of $H$. \textbf{Nesterov} uses the accelerated projected
gradient with step size $1/\lambda_{\max}(H)$, momentum
$\beta_j = (j-1)/(j+2)$, and 100 iterations per MPC step
\cite{Richter2012,Nesterov1983}. All simulations use 30 independent Monte
Carlo runs with disturbances drawn uniformly from $\mathcal{W}$ with
independent seeds.

\subsection{Standard rendezvous manoeuvres}

Orbital proximity operations are structured as a sequence of standardised
approach phases, each imposing a specific reference trajectory
$\{x_k^{\rm ref}\}$ in the LVLH frame \cite{Fehse2003,Breger2008,Hartley2015}.
The key claim studied here is that a single MPC controller with fixed
$Q$, $R$, $K$, $N$ can track the full sequence without any phase-specific
tuning, demonstrating the genericity of the approach
\cite{DiCairano2012,Specht2023,Zamblera2023}. Five standard manoeuvre
types are considered, covering the main proximity operation geometries
used in practice and in the modern rendezvous literature
\cite{Fehse2003,Gaias2015,Lovell2004,Oestreich2023}.

\subsubsection*{Translation}

A straight-line displacement between two points $p_0 = (p_{0,x}, p_{0,y},
p_{0,z})^\top$ and $p_f = (p_{f,x}, p_{f,y}, p_{f,z})^\top$ at constant
velocity \cite{Fehse2003}. This is the simplest manoeuvre, used as the
initial approach to bring the chaser into the proximity of the target:
\begin{equation}
x_k^{\rm ref} = \begin{pmatrix}
p_{0,x} + (p_{f,x} - p_{0,x})\,k/T \\
p_{0,y} + (p_{f,y} - p_{0,y})\,k/T \\
p_{0,z} + (p_{f,z} - p_{0,z})\,k/T \\
(p_{f,x} - p_{0,x})/(T T_s) \\
(p_{f,y} - p_{0,y})/(T T_s) \\
(p_{f,z} - p_{0,z})/(T T_s)
\end{pmatrix},
\end{equation}
where $T$ is the number of steps for the phase. The first three components
are positions in metres and the last three are the constant velocities in
m/s required to travel from $p_0$ to $p_f$ in exactly $T$ steps.

\subsubsection*{R-bar}

The chaser approaches the target along the radial direction $\delta x$,
from below or above, at constant radial velocity \cite{Fehse2003,Gaias2015}.
This is the standard approach for docking with space stations, as it
minimises collision risk in case of thruster failure. The parameter
$\delta x_0$ (m) is the initial radial offset and $-\delta x_0/(TT_s)$
(m/s) is the constant approach velocity:
\begin{equation}
x_k^{\rm ref} = \bigl(\delta x_0(1 - k/T),\; 0,\; 0,\;
                       -\delta x_0/(TT_s),\; 0,\; 0\bigr)^\top.
\end{equation}

\subsubsection*{V-bar}

The chaser approaches from behind along the along-track direction
$\delta y$, parallel to the orbital velocity of the target
\cite{Breger2008,DiCairano2012}. This is the approach used by cargo
vehicles for final docking. The parameter $\delta y_0$ (m) is the
initial along-track separation:
\begin{equation}
x_k^{\rm ref} = \bigl(0,\; \delta y_0(1 - k/T),\; 0,\;
                       0,\; -\delta y_0/(TT_s),\; 0\bigr)^\top.
\end{equation}

\subsubsection*{NMC (Natural Motion Circumnavigation)}

The chaser orbits the target along the 2:1 elliptical free-drift orbit
given by the CWH free solution \cite{Lovell2004,Hartley2015}. This
trajectory requires theoretically zero fuel because it follows the natural
orbital dynamics without continuous thrust, and is used for inspection of
the target satellite \cite{Oestreich2023,Specht2023}. The parameters are:
$A$ (m) the radial semi-axis, $2A$ the along-track semi-axis (the 2:1
ratio is fixed by the CWH equations and cannot be freely chosen), $B$ (m)
the out-of-plane amplitude, $C$ (m) an along-track offset shifting the
ellipse centre, and $\phi$, $\psi$ (rad) the initial phases determining
where on the ellipse the chaser starts:
\begin{align}
\delta x_k^{\rm ref} &= A \cos(n k T_s + \phi), \notag \\
\delta y_k^{\rm ref} &= -2A \sin(n k T_s + \phi) + C, \notag \\
\delta z_k^{\rm ref} &= B \sin(n k T_s + \psi), \notag \\
\delta\dot{x}_k^{\rm ref} &= -An \sin(n k T_s + \phi), \notag \\
\delta\dot{y}_k^{\rm ref} &= -2An \cos(n k T_s + \phi), \notag \\
\delta\dot{z}_k^{\rm ref} &= Bn \cos(n k T_s + \psi).
\end{align}
The velocity components are obtained by analytically differentiating the
position expressions. The NMC was used by Specht et al.\ \cite{Specht2023}
as the nominal trajectory for Tube MPC rendezvous with tumbling targets,
and by Oestreich et al.\ \cite{Oestreich2023} for adaptive tube MPC.

\subsubsection*{Corkscrew}

A variant of the NMC in which the ellipse radius decreases linearly,
causing the chaser to spiral towards the target
\cite{Lovell2004,Zamblera2023}. The parameters $A_0$ (m), $B_0$ (m) and
$C_0$ (m) are the initial values of the radial semi-axis, out-of-plane
amplitude and along-track offset respectively:
\begin{equation}
A(k) = A_0(1 - k/T), \quad B(k) = B_0(1 - k/T),
\quad C(k) = C_0(1 - k/T).
\end{equation}
The position and velocity expressions are the same as the NMC with $A$,
$B$, $C$ replaced by $A(k)$, $B(k)$, $C(k)$. The chaser spirals in three
dimensions and arrives at the origin at $k = T$. This approach provides
a smooth, progressive closing while maintaining the circumnavigation
geometry throughout the approach.

\subsection{Simulation results}

Figure~\ref{fig:tracking_errors} shows the tracking error norm
$\|x_k - x_k^{\rm ref}\|$ as a function of time for the five manoeuvres,
comparing ADMM (blue) and Nesterov (orange) under 30 Monte Carlo runs
with bounded random disturbances $\xi_k \sim \mathcal{U}(\mathcal{W})$,
under the new stress-test parameters ($N=100$, $T_s=10$\,s,
$\bar w_{\mathrm{pos}}=10$\,m, $\bar w_{\mathrm{vel}}=1$\,m/s, no thrust
bound). The thin transparent lines show individual Monte Carlo runs, the
bold line is the mean over 30 runs, and the shaded band represents one
standard deviation.

ADMM achieves lower and more concentrated error trajectories across all
five manoeuvres, settling around 12--13\,m with a comparatively narrow
Monte Carlo spread. Nesterov displays both a higher mean error and a
wider spread on Translation, R-bar and V-bar, and although it is no
longer the closest to ADMM on the NMC manoeuvre as in the original
benchmark, it remains the manoeuvre where the two solvers are closest in
final error (17.5\,m for Nesterov vs 12.6\,m for ADMM), consistent with
the smoother, better-conditioned sub-problem generated by the NMC's
periodic reference at each MPC step.

\begin{figure}[H]
\centering
\includegraphics[width=1\textwidth]{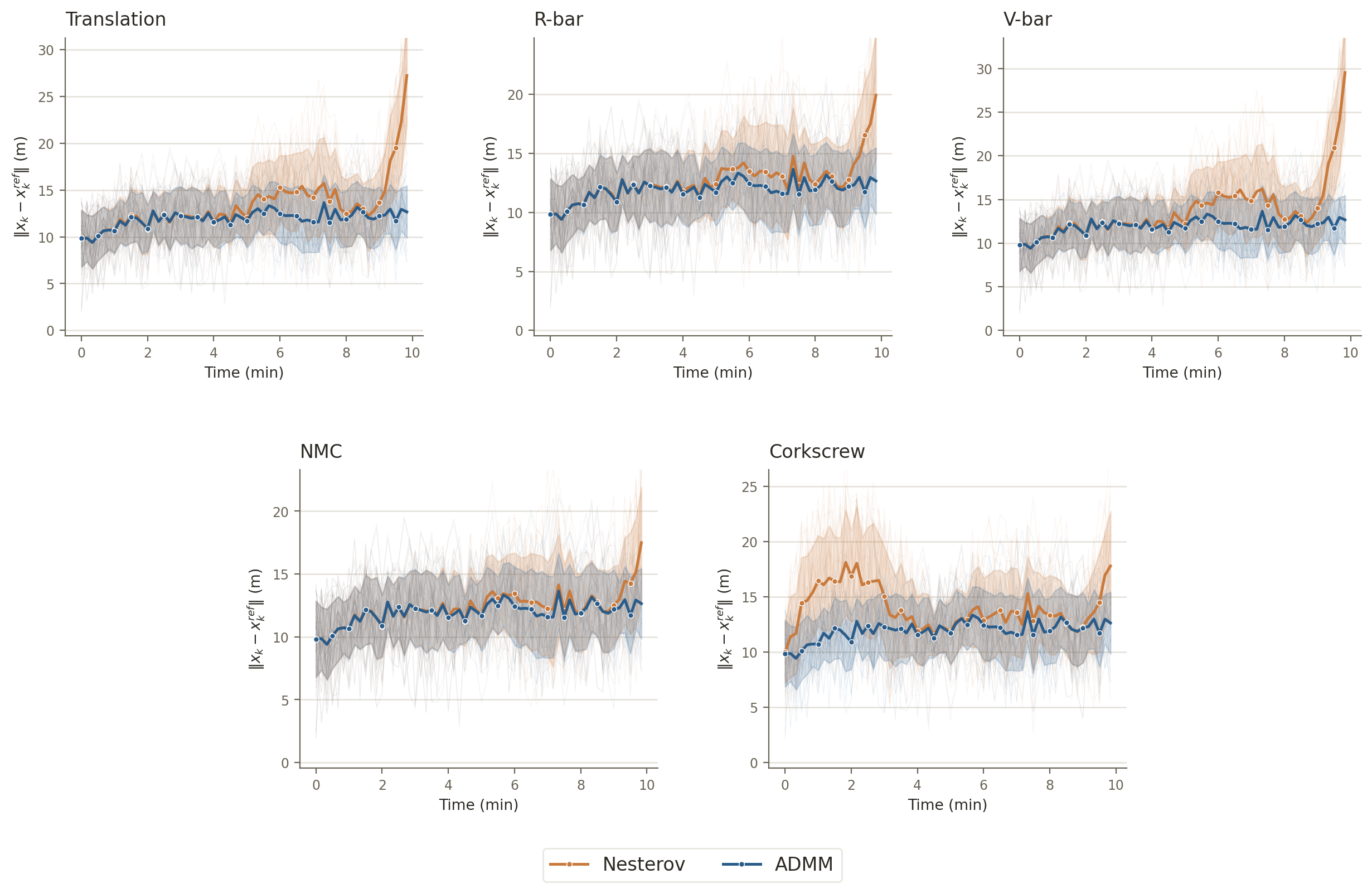}
\caption{Tracking error $\|x_k - x_k^{\rm ref}\|$ as a function of time
for the five standard rendezvous manoeuvres, comparing ADMM (blue) and
Nesterov (orange), under the stress-test parameters ($N=100$,
$T_s=10$\,s, $\bar w_{\mathrm{pos}}=10$\,m, $\bar w_{\mathrm{vel}}=1$\,m/s,
no thrust bound). Thin lines: 30 individual Monte Carlo runs. Bold line:
mean over 30 runs. Shaded band: one standard deviation.}
\label{fig:tracking_errors}
\end{figure}

Table~\ref{tab:manoeuvres} summarises the quantitative results. ADMM
achieves a final tracking error close to 12.6--12.7\,m on all five
manoeuvres, with a standard deviation around 2.8\,m, confirming reliable
performance across Monte Carlo runs. Nesterov is 1.4 to 2.3 times less
accurate than ADMM depending on the manoeuvre, with the gap narrowest on
the NMC (17.5\,m vs 12.6\,m) and widest on V-bar (29.5\,m vs 12.7\,m).

This contrast is again a direct consequence of the ill-conditioning of
the Hessian discussed for the 2D benchmark: with $N=100$ and $T_s=10$\,s,
$\kappa(H)$ reaches the same extreme order of magnitude, so the Nesterov
gradient method converges only partially within its fixed budget of 100
iterations, whereas the ADMM Cholesky factorisation remains immune to
conditioning effects \cite{Boyd2011,Richter2012,Rawlings2017}. Unlike in
the original benchmark, the two solvers now show comparable computation
times per step (6.0--6.3\,ms for Nesterov versus 6.6--7.1\,ms for ADMM):
the much longer horizon ($N=100$ instead of $N=5$) makes even the
Cholesky-based direct solve computationally significant, narrowing the
speed advantage that Nesterov previously held.

\begin{table}[H]
\centering
\caption{Comparison of ADMM and Nesterov on the five standard rendezvous
manoeuvres. Results over 30 Monte Carlo runs with bounded uniform
disturbances $\xi_k \sim \mathcal{U}(\mathcal{W})$,
$\bar{w} = (10,10,10,1,1,1)^\top$, $N=100$, $T_s=10$\,s, no thrust bound.
Bold values indicate the best result in each column.}
\vspace{4mm}
\label{tab:manoeuvres}
\vspace{2mm}
\renewcommand{\arraystretch}{1.25}
\small
\begin{tabular}{llccc}
\hline
\textbf{Manoeuvre} & \textbf{Solver}
  & \textbf{Final error (m)} & \textbf{Max error (m)}
  & \textbf{Time/step (ms)} \\
\hline
\multirow{2}{*}{Translation}
  & ADMM     & $\mathbf{12.662 \pm 2.796}$ & $\mathbf{17.923 \pm 1.612}$ & $7.066$ \\
  & Nesterov & $27.223 \pm 5.292$          & $28.778 \pm 4.049$          & $\mathbf{6.322}$ \\
\multirow{2}{*}{R-bar}
  & ADMM     & $\mathbf{12.655 \pm 2.806}$ & $\mathbf{17.933 \pm 1.602}$ & $6.694$ \\
  & Nesterov & $19.923 \pm 5.284$          & $22.835 \pm 2.516$          & $\mathbf{6.052}$ \\
\multirow{2}{*}{V-bar}
  & ADMM     & $\mathbf{12.667 \pm 2.796}$ & $\mathbf{17.923 \pm 1.613}$ & $6.620$ \\
  & Nesterov & $29.539 \pm 5.381$          & $30.914 \pm 4.264$          & $\mathbf{6.021}$ \\
\multirow{2}{*}{NMC}
  & ADMM     & $\mathbf{12.646 \pm 2.800}$ & $\mathbf{17.925 \pm 1.606}$ & $6.900$ \\
  & Nesterov & $17.496 \pm 4.435$          & $20.898 \pm 2.024$          & $\mathbf{6.274}$ \\
\multirow{2}{*}{Corkscrew}
  & ADMM     & $\mathbf{12.625 \pm 2.797}$ & $\mathbf{17.953 \pm 1.599}$ & $6.908$ \\
  & Nesterov & $17.777 \pm 4.950$          & $23.770 \pm 1.713$          & $\mathbf{6.302}$ \\
\hline
\end{tabular}
\end{table}

These results confirm the key claim: a single MPC controller with fixed
$Q$, $R$, $K$, $N$ successfully tracks all five standard manoeuvre types
without any phase-specific tuning. Under this much longer horizon and
tighter sampling period, ADMM remains the clearly preferable solver when
tracking accuracy is the priority, achieving errors 1.4 to 2.3 times
smaller than Nesterov on every manoeuvre; Nesterov retains only a modest
speed advantage (9--11\% faster per step), which is markedly smaller
than the 2.5$\times$ speed-up observed in the original, shorter-horizon
benchmark.

\section{Reducing the Computational Cost of the Condensed Formulation}
\label{sec:simplification}

\subsection*{Motivation}

The numerical experiments of Sections~5 and~6 are computationally
expensive for large prediction horizons. For $N=100$, building the
condensed matrices
\[
  \mathcal{A} =
  \begin{pmatrix} A_d \\ A_d^2 \\ \vdots \\ A_d^N \end{pmatrix},
  \qquad
  \mathcal{B} =
  \begin{pmatrix}
    B_d          & 0            & \cdots & 0   \\
    A_dB_d       & B_d          & \cdots & 0   \\
    \vdots       & \vdots       & \ddots & \vdots \\
    A_d^{N-1}B_d & A_d^{N-2}B_d & \cdots & B_d
  \end{pmatrix},
  \qquad
  \bar{Q} = \mathrm{blkdiag}(Q,\ldots,Q,P)
\]
requires computing $N$ distinct powers of $A_d$, each obtained from the
matrix exponential $A_d = e^{A_cT_s}$, and storing a dense
$4N\times4N$ matrix $\bar Q$ that literally repeats the same
$4\times4$ block $Q$ a hundred times. Both the memory footprint and the
computation time grow with $N$: for $N=100$, $\bar Q$ alone occupies
$400 \times 400 \times 8$ bytes, that is $1.28$ megabytes, to store
what is in fact a single $4\times4$ matrix repeated 99 times. In this
section we simplify these three objects, using a small parameter
already present in the CWH dynamics, and we check numerically that the
simplification changes nothing in the conclusions of Sections~5 and~6.

\subsection{Simplifying the Condensed Matrices}
\label{sec:simplifying_matrices}

The condensed formulation of Section~3.2 requires computing, for a
horizon of length $N$, all the powers $A_d^{\,m}$ for $m=0,\dots,N-1$,
and stacking them into the block matrix $\mathcal{B}$. For large $N$
this is computationally expensive and, as shown in the previous
remark, numerically delicate because of the ill-conditioning it
induces. We show here that the specific structure of the CWH matrix
$A_c$ allows this computation to be replaced by a much cheaper
polynomial approximation, valid whenever the orbital rate $n$ and the
sampling period $T_s$ satisfy $nT_s \ll 1$, which holds throughout
this work, since $n \approx 1.107\times10^{-3}$\,rad/s and
$T_s \in \{10, 100\}$\,s give $nT_s \approx 0.011$ or $0.111$, both
much smaller than 1.

\subsubsection*{Simplification of $A_d$}

Recall the continuous-time CWH matrix
\[
  A_c =
  \begin{pmatrix}
    0    & 0 & 1    & 0  \\
    0    & 0 & 0    & 1  \\
    3n^2 & 0 & 0    & 2n \\
    0    & 0 & -2n  & 0
  \end{pmatrix}.
\]
Every entry of $A_c$, other than the two structural 1's coming from
the kinematic relation $\dot{\bm x} = \bm v$, is proportional to the
orbital rate $n$ or to $n^2$. With $n \approx 1.107\times10^{-3}$\,rad/s,
the quantity
\[
  \theta = n T_s
\]
is small for both sampling periods used in this work:
$\theta \approx 0.011$ at $T_s=10$\,s and $\theta \approx 0.111$ at
$T_s=100$\,s.

By definition of the matrix exponential,
\[
  A_d = e^{A_cT_s}
  = I + A_cT_s + \frac{(A_cT_s)^2}{2!} + \frac{(A_cT_s)^3}{3!} + \cdots.
\]
Truncating this sum at order $p$ defines the approximation
\[
  A_d^{(p)} = \sum_{k=0}^{p} \frac{(A_cT_s)^k}{k!},
\]
with remainder bounded by
\[
  \bigl\| A_d - A_d^{(p)} \bigr\|
  \leq \sum_{k=p+1}^{\infty} \frac{\|A_cT_s\|^k}{k!}.
\]
Although $\|A_cT_s\|$ itself is not small, its successive powers decay
rapidly: numerically, $\|A_cT_s\| \approx 14.1$, but
$\|(A_cT_s)^2\| \approx 0.31$, $\|(A_cT_s)^3\| \approx 5.1\times10^{-3}$,
and $\|(A_cT_s)^4\| \approx 3.8\times10^{-5}$, so the series converges
quickly despite the first term being large.

Taking $p=3$ and $T_s=10$\,s gives
explicitly
\[
  A_d^{(3)} =
  \begin{pmatrix}
    1.00018 & 0 & 9.99980  & 0.11070 \\
    -1.357\times10^{-6} & 1 & -0.11070 & 9.99918 \\
    3.676\times10^{-5} & 0 & 0.99994 & 0.02214 \\
    -4.070\times10^{-7} & 0 & -0.02214 & 0.99975
  \end{pmatrix}.
\]
Compared with the exact value computed by \texttt{scipy.linalg.expm},
the difference is $\|A_d - A_d^{(3)}\| \approx 1.6\times10^{-6}$, six
orders of magnitude smaller than the position disturbances
$\bar w_{\mathrm{pos}}=10$\,m used throughout this work.

\subsubsection*{Simplification of $B_d$}

The input matrix is defined by the integral
\[
  B_d = \int_0^{T_s} e^{A_c\tau} B_c \, \mathrm{d}\tau.
\]

Expanding the matrix exponential inside the integral as a power series
in $\tau$,
\[
  e^{A_c\tau} = \sum_{k=0}^{\infty} \frac{(A_c\tau)^k}{k!}
  = I + A_c\tau + \frac{(A_c\tau)^2}{2!} + \frac{(A_c\tau)^3}{3!} + \cdots,
\]
and substituting into the integral gives
\[
  B_d = \int_0^{T_s}
  \left(\sum_{k=0}^{\infty} \frac{(A_c\tau)^k}{k!}\right) B_c \, \mathrm{d}\tau
  = \sum_{k=0}^{\infty} \frac{A_c^k B_c}{k!} \int_0^{T_s} \tau^k \, \mathrm{d}\tau,
\]
where the sum and the integral have been exchanged, and $A_c^k$ and
$B_c$ have been taken out of the integral since they do not depend on
$\tau$. Each remaining integral is elementary,
\[
  \int_0^{T_s} \tau^k \, \mathrm{d}\tau = \frac{T_s^{\,k+1}}{k+1},
\]
so that
\[
  B_d = \sum_{k=0}^{\infty} \frac{A_c^k B_c}{k!} \cdot \frac{T_s^{\,k+1}}{k+1}
  = \sum_{k=0}^{\infty} \frac{T_s^{\,k+1}}{(k+1)!} A_c^k B_c,
\]
the same type of series as for $A_d$, but with the extra factor
$T_s/(k+1)$ coming from the integration. Truncating this series at
order $p$ defines the approximation
\[
  B_d^{(p)} = \sum_{k=0}^{p} \frac{T_s^{\,k+1}}{(k+1)!} A_c^k B_c.
\]
With $p=3$ and $T_s=10$\,s, this gives explicitly
\[
  B_d^{(3)} =
  \begin{pmatrix}
    49.999  & 0.369   \\
    -0.369  & 49.998  \\
    9.9998  & 0.1107  \\
    -0.1107 & 9.9992
  \end{pmatrix},
\]
differing from the exact value by $\|B_d - B_d^{(3)}\| \approx
3.2\times10^{-6}$, negligible compared with the entries of $B_d$
itself, which are of order 10 to 50.

\subsubsection*{Simplification of $\bar Q$}

This simplification introduces no error at all. The matrix
$\bar{Q} = \mathrm{blkdiag}(Q,\ldots,Q,P) \in \mathbb{R}^{4N\times4N}$
contains nothing more than 99 literal copies of the $4\times4$ matrix
$Q$, plus one copy of the terminal weight $P$. Let
$\bm e = \bm X - \bm X^{\mathrm{ref}} \in \mathbb{R}^{4N}$ denote the
stacked tracking error, partitioned into its $N$ constituent 4-vectors
\[
  \bm e =
  \begin{pmatrix} \bm e_1 \\ \bm e_2 \\ \vdots \\ \bm e_N \end{pmatrix},
  \qquad \bm e_j = \bm x_{k+j} - \bm x_{k+j}^{\mathrm{ref}} \in \mathbb{R}^4.
\]
Because $\bar{Q}$ is block-diagonal, the product $\bar{Q}\bm e$ acts on
each block independently:
\[
  \bar{Q}\bm e =
  \begin{pmatrix} Q\bm e_1 \\ Q\bm e_2 \\ \vdots \\ P\bm e_N \end{pmatrix}.
\]
Computing the right-hand side by applying the $4\times4$ matrix $Q$
(respectively $P$ for the last block) to each 4-vector $\bm e_j$ gives,
term for term, exactly the same result as multiplying by the full
$400\times400$ matrix $\bar{Q}$, at a fraction of the memory and
computation cost. The same reasoning applies wherever $\bar{Q}$ appears
in the condensed cost, in particular in
$H = \mathcal{B}^\top\bar{Q}\mathcal{B}+\bar{R}$ and
$\bm f = \mathcal{B}^\top\bar{Q}\bm c$.

\subsubsection*{Summary of the Three Simplifications}
\vspace{-3mm}
\begin{table}[H]
\centering
\caption{The three matrix simplifications, what they replace, and the
accuracy loss incurred.}
\renewcommand{\arraystretch}{1.3}
\vspace{4mm}
\begin{tabular}{p{2cm}ccc}
\hline
\textbf{Simplification} & \textbf{Replaces} & \textbf{By} & \textbf{Accuracy loss} \\
\hline
1. $A_d$ truncation & matrix exponential & degree-3 polynomial & $1.6\times10^{-6}$ \\
2. $B_d$ truncation & matrix integral & degree-3 polynomial & $3.2\times10^{-6}$ \\
3. $\bar{Q}$ block-wise & $400\times400$ dense matrix & $4\times4$ matrix, applied $N$ times & exact (zero) \\
\hline
\end{tabular}
\end{table}

\subsection{Numerical Validation Across the Parameter Space}

Simplifications 1 and 2 were applied together, with $p=3$, and
substituted into every experiment of Sections~5 and~6, without any
other change to the algorithms, disturbances, or horizon length. Since
$\mathcal{A}$, $\mathcal{B}$ and $\bar{Q}$ enter the QP solved by both
ADMM and Nesterov identically, this test also checks that neither
solver's conclusions depend on how the dynamics matrices are computed.

\vspace{3mm}
\begin{table}[H]
\centering
\caption{Final tracking error, exact matrices (\texttt{expm}) versus
simplified matrices (truncated series, $p=3$), on the 2D rendezvous
benchmark of Section~5 ($N=100$, $T_s=10$\,s).}
\renewcommand{\arraystretch}{1.3}
\vspace{1cm}
\begin{tabular}{lccc}
\hline
\textbf{Algorithm} & \textbf{Exact (m)} & \textbf{Simplified (m)} & \textbf{Difference} \\
\hline
Linear MPC & $9.356931$   & $9.356931$   & $6.8\times10^{-7}$ \\
Tube MPC   & $12.654944$  & $12.654944$  & $9.1\times10^{-8}$ \\
Fast MPC   & $662.896186$ & $662.896199$ & $1.3\times10^{-5}$ \\
SCvx       & $11.708934$  & $11.708950$  & $1.5\times10^{-5}$ \\
\hline
\end{tabular}
\end{table}

\begin{table}[H]
\centering
\caption{Final tracking error (mean over 30 Monte Carlo runs), exact
versus simplified matrices, on the five standard manoeuvres of
Section~6 ($N=100$, $T_s=10$\,s).}
\renewcommand{\arraystretch}{1.3}
\vspace{5mm}
\small
\begin{tabular}{|l|l|c|c|c|}
\hline
\textbf{Manoeuvre} & \textbf{Solver} & \textbf{Exact (m)} & \textbf{Simplified (m)} & \textbf{Difference} \\
\hline
\multirow{2}{*}{Translation} & ADMM     & $12.662384$ & $12.662385$ & $5.7\times10^{-7}$ \\
                             & Nesterov & $27.223433$ & $27.223434$ & $7.8\times10^{-7}$ \\
\hline
\multirow{2}{*}{R-bar}       & ADMM     & $12.654630$ & $12.654631$ & $6.6\times10^{-7}$ \\
                             & Nesterov & $19.922696$ & $19.922696$ & $3.0\times10^{-7}$ \\
\hline
\multirow{2}{*}{V-bar}       & ADMM     & $12.667129$ & $12.667130$ & $5.6\times10^{-7}$ \\
                             & Nesterov & $29.538638$ & $29.538639$ & $7.9\times10^{-7}$ \\
\hline
\multirow{2}{*}{NMC}         & ADMM     & $12.645880$ & $12.645880$ & $6.1\times10^{-7}$ \\
                             & Nesterov & $17.496218$ & $17.496218$ & $5.8\times10^{-7}$ \\
\hline
\multirow{2}{*}{Corkscrew}   & ADMM     & $12.624788$ & $12.624789$ & $4.9\times10^{-7}$ \\
                             & Nesterov & $17.777034$ & $17.777034$ & $2.5\times10^{-7}$ \\
\hline
\end{tabular}
\end{table}

\subsection{Cost and Validity of the Approximation
}

Three things make this simplification worth adopting. First, it is
accurate: the largest difference observed between exact and simplified
matrices, across every algorithm and every manoeuvre tested, is
$1.5\times10^{-5}$\,m, at least six orders of magnitude smaller than the
tracking errors themselves, which range from 9\,m to 663\,m. Second, it
preserves the conditioning of the problem exactly: the condition number
of the Hessian is unchanged to four significant figures,
$\kappa(H) \approx 3.216\times10^{11}$ in both cases, so the growth of
$\kappa(H)$ with $N$ discussed in Remark~\ref{rem:kappa_growth}, and the
resulting divergence of Fast MPC, are confirmed to be properties of the
underlying CWH dynamics itself, not artefacts of how $A_d$ and $B_d$
happen to be computed. Third, it is cheaper: the exact route requires
one matrix exponential evaluation and $N-1$ successive matrix
multiplications to build $\mathcal{A}$ and $\mathcal{B}$, plus a
$400\times400$ matrix held in memory for $\bar Q$; the simplified route
requires only a fixed polynomial of degree 3 in $A_c$, evaluated once,
and a $4\times4$ matrix $Q$ applied block by block, reducing the memory
needed for $\bar Q$ from $1.28$ megabytes to $128$ bytes for $N=100$.
Taken together, these three points show that the simplification can
replace the exact matrix exponential and the dense block-diagonal cost
matrix at no practical cost in accuracy, for either solver.

\subsection{The Fuel–Accuracy Trade-off Through $Q$ and $R$}

Throughout Sections~5 and~6, the weight matrices $Q$ and $R$ were kept
fixed. It is natural to ask what happens when they are allowed to vary:
does pushing $Q$ higher relative to $R$ genuinely buy extra tracking
accuracy, and at what fuel cost? To investigate this, we ran the Tube
MPC controller on the same 15\,km to 1\,km benchmark under five
settings, ranging from a fuel-priority regime ($R \gg Q$) to a
precision-priority regime ($Q \gg R$), all other parameters (horizon,
disturbance, sampling period) held fixed across the five runs.

\begin{figure}[H]
\centering
\includegraphics[width=0.95\textwidth]{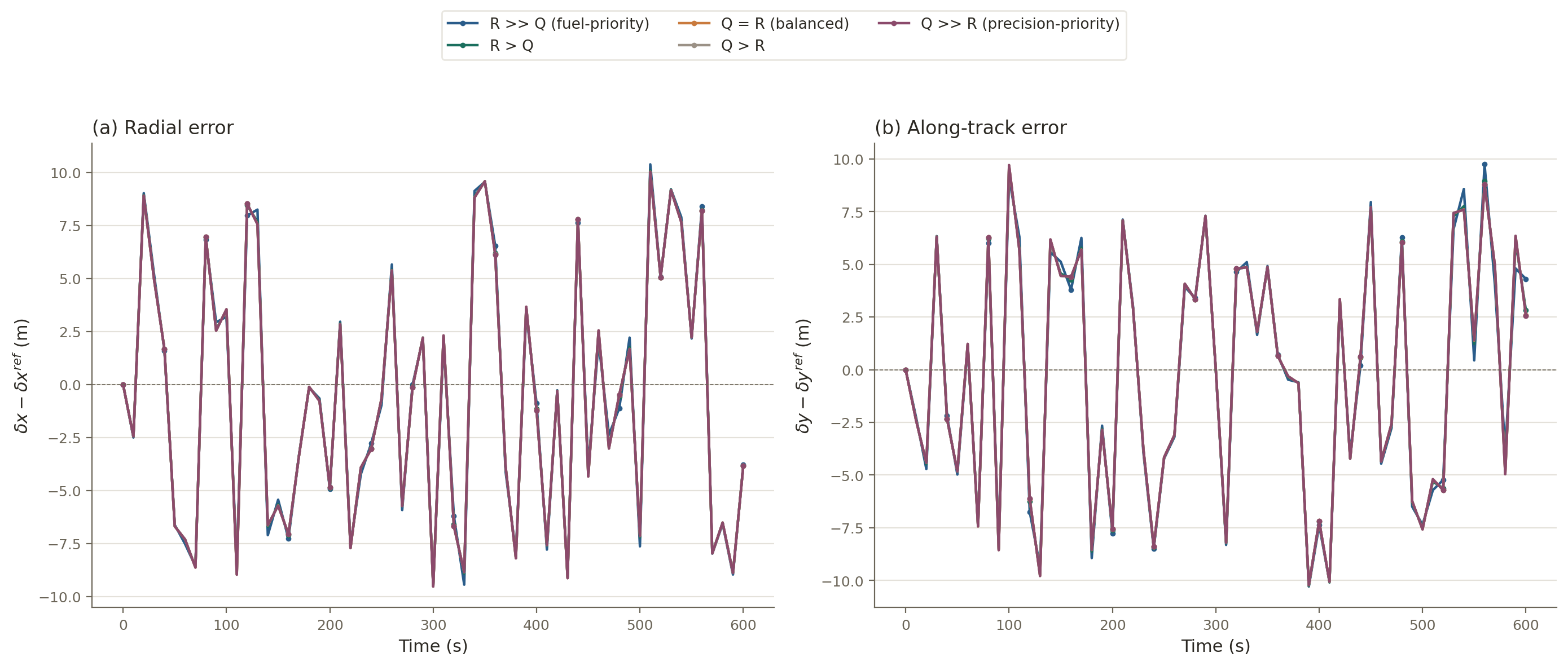}
\caption{Radial (a) and along-track (b) tracking error over time, for
the five $Q/R$ settings tested. The five curves are nearly
indistinguishable in both panels.}
\label{fig:qr_trajectories}
\end{figure}

The first thing to notice, in Figure~\ref{fig:qr_trajectories}, is
that the five trajectories are almost impossible to tell apart. Whether
$Q$ dominates $R$ by a factor of a thousand or the reverse, the radial
and along-track error curves follow essentially the same path
throughout the simulation. This was not the expected outcome: the
usual intuition is that weighting position more heavily should pull
the trajectory closer to the reference.

\begin{figure}[H]
\centering
\includegraphics[width=0.75\textwidth]{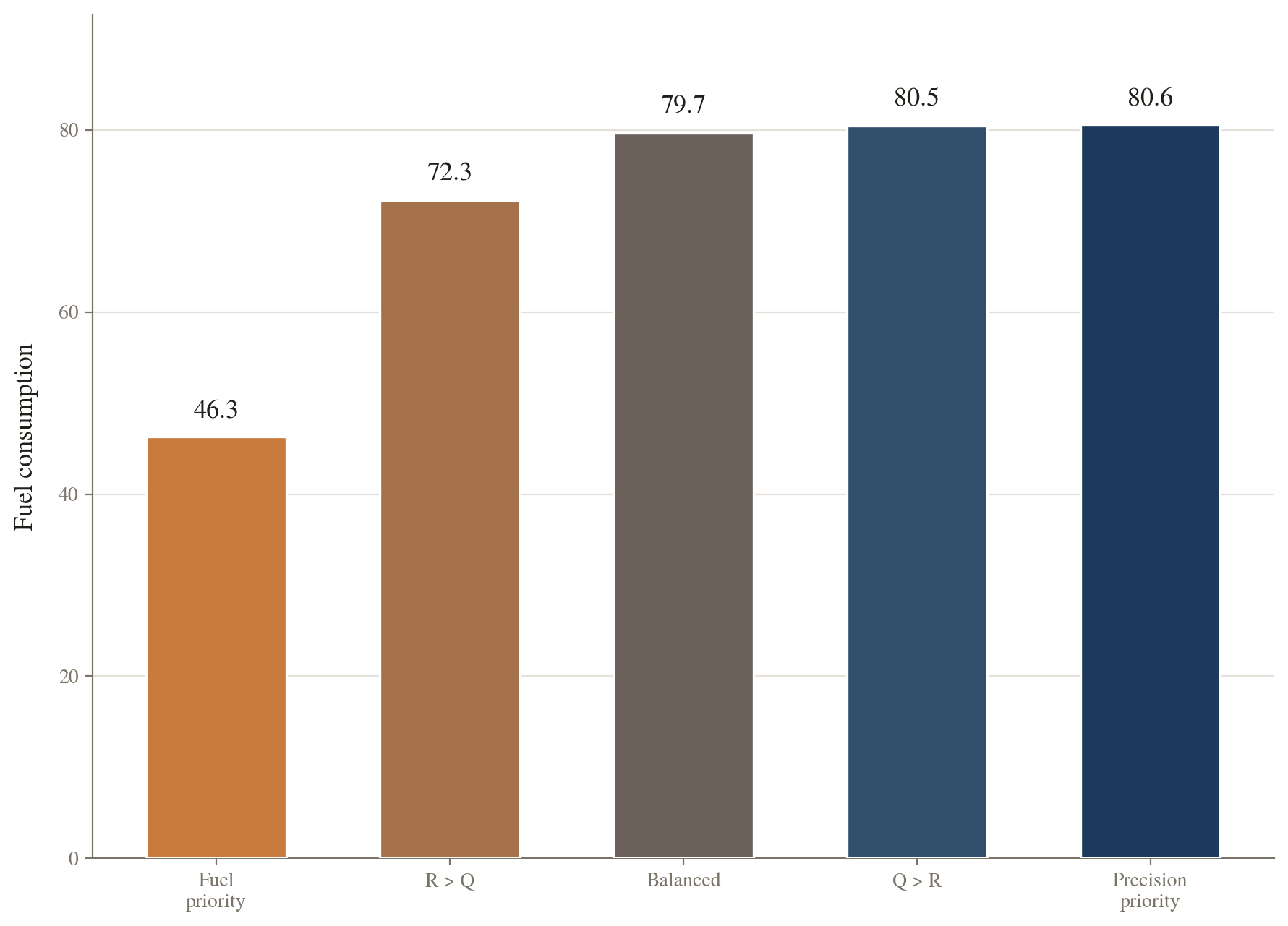}
\caption{Total fuel consumption $\sum_k \|u_k\|^2$ for the same five
$Q/R$ settings. Unlike the tracking error, fuel consumption responds
clearly to the weighting, nearly doubling from the fuel-priority to
the precision-priority end of the range.}
\label{fig:qr_fuel}
\end{figure}

Fuel consumption tells a different story. As shown in
Figure~\ref{fig:qr_fuel}, it rises steadily as $Q/R$ increases, from
roughly 46 to 81 (arbitrary units), nearly doubling across the range
tested. So the weighting is not inert: it clearly costs more fuel to
push $Q$ higher, without buying the accuracy one might expect in
return, a conclusion Figure~\ref{fig:qr_trajectories} already suggested
and Figure~\ref{fig:two_sided_bound_validation} confirms from a different angle:
the correction factor $\alpha$ settles into a proved interval once
$\rho$ is known, so beyond a certain point no further increase in $Q$
can improve the guaranteed error bound.

The explanation lies in how Tube MPC splits the control effort. The
applied thrust is $u_k = v_k - Ke_k$, where $v_k$ is the nominal
command shaped by $Q$ and $R$, and $-Ke_k$ is the real-time LQR
correction reacting to the error $e_k$ already measured onboard.
Comparing the two terms over a representative run shows that
$\|Ke_k\|$ is roughly five to ten times larger than $\|v_k\|$: the
feedback correction does most of the work of keeping the chaser near
the reference, largely independently of how $Q$ and $R$ are set.
Pushing $Q$ higher still increases the nominal command, and therefore
the fuel bill, but the tracking error is already dominated by a
mechanism that $Q$ and $R$ barely influence.
\vspace{3mm}
\begin{remark}
This behaviour is specific to the unconstrained regime studied
throughout this benchmark, where no bound is imposed on the thrust
command. Without a saturation limit, the LQR correction $-Ke_k$ is
always free to apply whatever thrust $K$ prescribes, fully compensating
the disturbance regardless of how $v_k$ is tuned; this is precisely why
the correction term dominates so consistently above. Under a bounded
actuator, this would no longer hold: if $\|Ke_k\|$ exceeded $u_{\max}$,
the correction would saturate, and the nominal command $v_k$ would then
have to take over part of the disturbance rejection it plays no role in
here. Weighting $Q$ more heavily could then plausibly buy back some of
the accuracy sacrificed to saturation, so the conclusion reached here
should be read as a property of the unconstrained case specifically,
not of Tube MPC in general.
\end{remark}

For this benchmark, the practical takeaway is straightforward: within
the regime studied here (Tube MPC, no bound on the thrust command),
there is little to gain from weighting $Q$ heavily. A fuel-priority
setting achieves essentially the same tracking performance at a
noticeably lower cost, which is the opposite of what a naive reading
of the cost function $\ell = e^\top Q e + u^\top R u$ might suggest.

\subsection{A Closed-Form Estimate of the Maximum Tracking Error}
\label{sec:zbar_formula}

We seek a simple relation between the weight matrices $Q$, $R$ and the
maximum position error $\bar z$ that the Tube MPC controller can
guarantee, without having to run a full closed-loop simulation for
every candidate $(Q,R)$. The derivation starts from the closed-loop
error dynamics already established in Proposition~1,
\begin{equation}
\label{eq:err_dyn_recall}
e_{k+1} = A_{cl}\,e_k + \xi_k, \qquad A_{cl} = A_d - B_dK, \qquad \|\xi_k\|\le\bar w,
\end{equation}
where $K$ is the LQR gain obtained from $Q$ and $R$ via the DARE
\eqref{eq:DARE} and the gain formula \eqref{eq:K}.

Consider first a one-dimensional version of this recursion, in which
the error is a single number and $A_{cl}$ is replaced by a scalar
$\rho \in (0,1)$: $e_{k+1} = \rho\,e_k + \xi_k$, with $|\xi_k|\le\bar w$
and $e_0=0$. Unrolling this recursion gives
$e_k = \sum_{i=0}^{k-1}\rho^{\,k-1-i}\xi_i$, exactly as for the full
error dynamics derived earlier. The worst case occurs when every
perturbation takes its maximal value $\bar w$, aligned in the same
direction; taking absolute values and letting $k\to\infty$,
\[
|e_k| \le \bar w\sum_{i=0}^{k-1}\rho^i
\xrightarrow[k\to\infty]{}
\bar w\sum_{i=0}^{\infty}\rho^i = \frac{\bar w}{1-\rho},
\]
using the geometric series identity, valid since $0<\rho<1$. This bound
is attained in the limit by choosing $\xi_i=\bar w$ for every $i$, so
for the scalar case the worst-case error radius is exactly
$\bar z_{\text{scalar}} = \bar w/(1-\rho)$.

The true error dynamics \eqref{eq:err_dyn_recall} are not scalar:
$e_k\in\mathbb{R}^n$ ($n=4$ for the planar rendezvous of Section~5,
$n=6$ for the three-dimensional manoeuvres of Section~6) and
$A_{cl}\in\mathbb{R}^{n\times n}$. The scalar identity above still
governs this vector case through the eigenvalues of $A_{cl}$. A scalar
$\lambda$ and a nonzero vector $v$ form an eigenpair of $A_{cl}$ if
$A_{cl}v=\lambda v$: along the direction $v$, the action of the full
matrix reduces to multiplication by the single number $\lambda$, with
no mixing into other directions. Intuitively, an eigenvalue is the
factor by which the error is scaled along one particular direction of
the state space at each step: a value close to $1$ means the error
barely shrinks along that direction, while a value close to $0$ means
it is corrected almost immediately. Decomposing the error along a
basis of eigenvectors $v_1,\ldots,v_n$ with eigenvalues
$\lambda_1,\ldots,\lambda_n$, each coefficient evolves independently of
the others, exactly according to the scalar recursion solved above,
with $\rho$ replaced by the corresponding $\lambda_j$.

Since $A_{cl}$ is stable, every $|\lambda_j|<1$, but they need not be
equal. As $k$ grows, the component associated with the largest
eigenvalue in modulus decays the slowest, and eventually dominates
every other mode, whose relative contribution vanishes geometrically.
This is why the worst-case error is governed, on the timescales
relevant to the mission, by a single number: the spectral radius
\begin{equation}
\rho = \max_j \bigl|\lambda_j(A_{cl})\bigr|.
\end{equation}
This reasoning relies on there being a well-separated dominant
eigenvalue: the direction associated with $\rho$ must decay
sufficiently slower than every other direction for its contribution to
eventually outweigh theirs. If several eigenvalues share a modulus
close to $\rho$, no single direction dominates within the horizon of
interest, and reducing the error dynamics to the scalar quantity $\rho$
is no longer justified; the estimate below should then be expected to
lose accuracy, exactly as observed in Remark~\ref{rem:zbar_breakdown}
for small ratios $Q/R$. Under the assumption that $\rho$ is well
separated from the rest of the spectrum, the estimate reads
\begin{equation}
\bar z \approx \frac{\bar w}{1-\rho},
\end{equation}
with $\rho$ now computed from the true $n\times n$ matrix $A_{cl}$
rather than a scalar, expressing $\bar z$ entirely in terms of $Q$ and
$R$ through the chain $(Q,R)\to P\to K\to A_{cl}\to\rho$, using only
quantities already introduced in Section~3.3.

This estimate is not exact for $n>1$, for two reasons the scalar
derivation cannot see: sub-dominant eigenvalues still contribute to
$\bar z$ at finite $k$, even as their relative contribution vanishes
asymptotically; and the eigenvectors of $A_{cl}$ are generally not
orthogonal, so recombining the error along them can inflate the bound
beyond the scalar prediction. Before correcting for these two effects,
it is worth noting that the naive quantity $\|e\|_\infty$ used
implicitly above is not even dimensionally well posed: it mixes
position components (in metres) with velocity components (in metres
per second), so its numerical value depends on an arbitrary choice of
units for the state. We remove this ambiguity by working in the
disturbance-scaled error
\begin{equation}
\tilde e = D^{-1}e, \qquad D = \operatorname{diag}(\bar w),
\end{equation}
so that $\|D^{-1}\xi_k\|_\infty \le 1$ for every admissible disturbance,
and by writing the scaled closed-loop matrix $\tilde A = D^{-1}A_{cl}D$.
The scalar identity of the one-dimensional case above generalises
directly to this scaled variable through its eigendecomposition
$\tilde A = V\Lambda V^{-1}$, and the scaled margin is defined as
\begin{equation}
\label{eq:scaled_margin}
\tilde z \;=\; \max_j \sum_{i\ge0} \sum_l \bigl|[\tilde A^{\,i}]_{jl}\bigr|,
\end{equation}
which is dimensionless by construction, and from which the physical
margin is recovered componentwise by $\bar z_j \le \bar w_j\,\tilde z$.

Rather than absorbing the two effects above into a single calibrated
constant, they can be bounded exactly.

\begin{proposition}[Two-sided bound]
\label{prop:two_sided_bound}
Let $\rho = \rho(A_{cl})$ be the spectral radius of the closed-loop
error matrix and let $\tilde A = V\Lambda V^{-1}$ be its scaled
eigendecomposition. Then
\begin{equation}
\label{eq:two_sided_bound}
\frac{1}{1-\rho} \;\le\; \tilde z \;\le\; \frac{\kappa_\infty(V)}{1-\rho},
\qquad
\kappa_\infty(V) = \|V\|_\infty\,\|V^{-1}\|_\infty.
\end{equation}
\end{proposition}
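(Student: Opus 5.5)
The plan is to work with the nonnegative matrix $M = \sum_{i\ge0} |\tilde A^{\,i}|$, where $|\cdot|$ denotes the entrywise absolute value. The definition \eqref{eq:scaled_margin} then reads $\tilde z = \|M\|_\infty$, the maximum absolute row sum of $M$.

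Two preliminary facts come first.
\begin{enumerate}[label=\roman*)]
  \item $\tilde A = D^{-1}A_{cl}D$ is similar to $A_{cl}$, so it has the same eigenvalues. Hence $\rho(\tilde A) = \rho$, and $\rho<1$ because $A_{cl}$ is stable.
  \item The statement presupposes the eigendecomposition $\tilde A = V\Lambda V^{-1}$, so $\tilde A$ is diagonalisable. This gives $\tilde A^{\,i} = V\Lambda^iV^{-1}$, with $\|\Lambda^i\|_\infty = \max_j|\lambda_j|^i = \rho^i$.
\end{enumerate}
All series below then converge geometrically, so $M$ is well defined.

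\textbf{Upper bound.} For each row $j$ and each power $i$, the $j$-th absolute row sum of $\tilde A^{\,i}$ is at most
\[
\|\tilde A^{\,i}\|_\infty \le \|V\|_\infty\|\Lambda^i\|_\infty\|V^{-1}\|_\infty = \kappa_\infty(V)\,\rho^i .
\]
Summing over $i$ gives, for every $j$, $\sum_i\sum_l |[\tilde A^{\,i}]_{jl}| \le \kappa_\infty(V)\sum_i\rho^i = \kappa_\infty(V)/(1-\rho)$. Taking the maximum over $j$ yields the right-hand inequality of \eqref{eq:two_sided_bound}. This step is routine.

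\textbf{Lower bound.} This is the step requiring care. The naive route fails: it would use $\|\tilde A^{\,i}\|_\infty \ge \rho^i$ and sum over $i$. But $\tilde z$ is a maximum of sums over $i$, and that is only bounded above by the sum over $i$ of the maxima, so the inequality goes the wrong way.

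Instead, fix an eigenvector $w$ of $\tilde A$ (possibly complex) with $\tilde A w = \lambda w$ and $|\lambda| = \rho$. By the triangle inequality, entrywise,
\[
|\tilde A^{\,i}|\,|w| \;\ge\; |\tilde A^{\,i}w| \;=\; \rho^i|w| .
\]
Summing over $i$ gives $M|w| \ge \frac{1}{1-\rho}|w|$ entrywise. Now choose the index $j$ with $|w_j| = \|w\|_\infty > 0$. Then
\[
\frac{1}{1-\rho}\,\|w\|_\infty \le (M|w|)_j \le \Bigl(\sum_l M_{jl}\Bigr)\|w\|_\infty \le \tilde z\,\|w\|_\infty .
\]
Dividing by $\|w\|_\infty$ gives $\tilde z \ge 1/(1-\rho)$. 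This completes the plan for both sides of \eqref{eq:two_sided_bound}.

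The step most likely to be the obstacle is this single-row selection in the lower bound. It is what lets one common index $j$ serve every power $i$ simultaneously, and it works because the same eigenvector $w$ is used for all $i$.
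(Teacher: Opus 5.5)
Your proof is correct. The upper bound is the paper's argument, applied row by row. The lower bound takes a genuinely different route, and yours is the one that actually works.

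The paper writes $\tilde z = \sum_{i\ge0}\|\tilde A^{\,i}\|_\infty$ and then bounds each term by $\|\tilde A^{\,i}\|_\infty \ge \rho(\tilde A^{\,i}) = \rho^i$. In the definition of the scaled margin, however, the maximum over the row $j$ sits outside the sum over $i$. So only $\tilde z \le \sum_{i\ge0}\|\tilde A^{\,i}\|_\infty$ holds, and the inequality is strict whenever different powers attain their largest row sum in different rows. For example, take $\tilde A$ upper triangular with diagonal $(-0.5,\,0.9)$ and off-diagonal entry $0.5$: row~1 dominates at $i=1$, row~2 at $i=2$, and one finds $\tilde z = 10 < 10.1 = \sum_i\|\tilde A^{\,i}\|_\infty$.

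This is exactly the wrong-direction step you flag. The paper's one-line argument proves $\sum_i\|\tilde A^{\,i}\|_\infty \ge 1/(1-\rho)$, which does not imply the stated lower branch. The same identification is harmless in the upper branch, where only ``$\le$'' is needed.

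Your common-eigenvector argument closes this gap. A single $w$ with $\tilde A w = \lambda w$, $|\lambda| = \rho$, gives $M|w| \ge (1-\rho)^{-1}|w|$ entrywise for all powers at once. Reading this in the row where $|w|$ peaks gives $\tilde z = \|M\|_\infty \ge (1-\rho)^{-1}$. In effect this is the Collatz--Wielandt bound $\|M\|_\infty \ge \rho(M) \ge (1-\rho)^{-1}$ for the nonnegative matrix $M$, made explicit.

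Your version has a further advantage: the lower branch never uses diagonalisability. Convergence of $M$ follows from $\rho<1$ alone, and an eigenvector for an eigenvalue of modulus $\rho$ always exists. So the cheap screening test the paper builds on the lower branch is valid for every stable $A_{cl}$, and only the upper branch needs $\tilde A = V\Lambda V^{-1}$.
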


\begin{proof}
\emph{Lower branch.} The spectral radius of a matrix never exceeds any
of its induced norms, so $\|\tilde A^{\,i}\|_\infty \ge \rho(\tilde
A^{\,i}) = \rho^i$ for every $i$. Summing over $i$,
\[
\tilde z = \sum_{i\ge0} \|\tilde A^{\,i}\|_\infty
\;\ge\; \sum_{i\ge0} \rho^i \;=\; \frac{1}{1-\rho}.
\]

\emph{Upper branch.} Writing $\tilde A^{\,i} = V\Lambda^i V^{-1}$ and
using submultiplicativity of the induced $\infty$-norm,
\[
\|\tilde A^{\,i}\|_\infty
= \|V\Lambda^i V^{-1}\|_\infty
\;\le\; \|V\|_\infty\,\|\Lambda^i\|_\infty\,\|V^{-1}\|_\infty
= \kappa_\infty(V)\,\rho^i,
\]
since $\Lambda$ is diagonal with $\|\Lambda^i\|_\infty = \rho^i$.
Summing the resulting geometric series gives
\[
\tilde z \;\le\; \kappa_\infty(V) \sum_{i\ge0} \rho^i
\;=\; \frac{\kappa_\infty(V)}{1-\rho}. \qedhere
\]
\end{proof}

Writing $\tilde z = \alpha/(1-\rho)$, Proposition~\ref{prop:two_sided_bound}
states that $1 \le \alpha \le \kappa_\infty(V)$: rather than calibrating
a single constant $\alpha$ against simulation data, as an earlier version
of this estimate did, $\alpha$ is shown here to lie inside an interval
computed exactly, on both sides, from $Q$ and $R$ alone, with no
simulation and no fitting. In particular the lower bound $\alpha \ge 1$
is not an empirical observation but a proved consequence of $\rho$
never exceeding an induced norm; any calibration returning $\alpha<1$
would in fact be diagnosing a units inconsistency in the uncorrected
$\|e\|_\infty$, not a property of the dynamics.

Figure~\ref{fig:two_sided_bound_validation} verifies
\eqref{eq:two_sided_bound} on 1000 weight pairs $(Q,R)$ drawn
independently over four orders of magnitude: every point falls inside
the admissible band between the two proved branches, as it must.

\begin{figure}[H]
\centering
\includegraphics[width=0.68\textwidth]{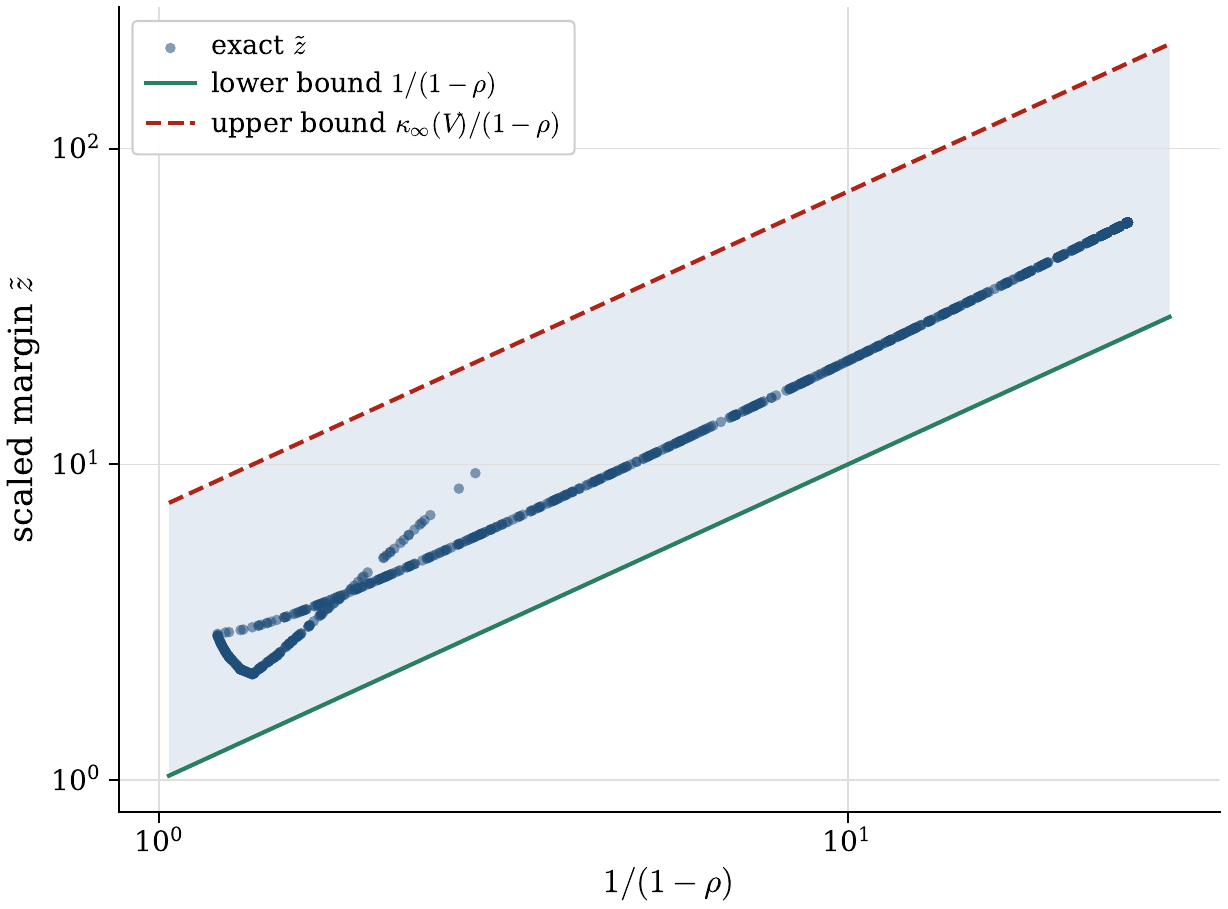}
\caption{The two-sided bound, verified on every weight pair. Each point
is the scaled margin $\tilde z$ computed exactly from the closed-loop
error dynamics, for one of 1000 randomly drawn pairs $(Q,R)$, plotted
against $1/(1-\rho)$. Every point lies between the lower branch
$1/(1-\rho)$ (proved exact lower bound) and the upper branch
$\kappa_\infty(V)/(1-\rho)$ (proved upper bound); no point can lie
outside the shaded band by Proposition~\ref{prop:two_sided_bound}.}
\label{fig:two_sided_bound_validation}
\end{figure}

\begin{remark}[What a single calibrated constant is still good for]
A fixed $\alpha$ calibrated once on this data set predicts $\tilde z$
to a few percent on average, but with a much larger worst-case error,
so it remains at best an approximate surrogate, not a certified one;
given that both exact branches of \eqref{eq:two_sided_bound} are
available at negligible extra cost, there is little reason to prefer
it. Its practical value lies specifically in the lower branch, which
requires only the spectral radius $\rho$ and no eigendecomposition at
all: if $\bar w_{\text{pos}}/(1-\rho) > \varepsilon$, the weighting is
certifiably infeasible and can be rejected immediately, without
computing $\bar z$ exactly. This makes the lower branch a cheap
screening test for the $(Q,R)$ design loop, ahead of the exact
evaluation of $\bar z$. As anticipated, an exact, closed-form value of
$\bar z$ turns out to exist and is developed in full below.
\end{remark}

\subsubsection*{Large-scale validation}

Formula~\eqref{eq:two_sided_bound} was tested against 1000 random
$(Q,R)$ pairs, with $Q$ and $R$ each drawn independently over four
orders of magnitude. For each pair, $\rho$, $V$ and the exact scaled
margin $\tilde z$ were obtained from the true closed-loop dynamics;
unlike a calibrated point estimate, no constant is fitted here, so
there is nothing to validate \emph{out of sample}: the claim under
test is simply that inequality~\eqref{eq:two_sided_bound} holds, which
Proposition~\ref{prop:two_sided_bound} already guarantees for every
$(Q,R)$, including ratios far outside this sample.

Table~\ref{tab:bound_regimes} summarises the results by regime.
\vspace{-1mm}
\begin{table}[H]
\centering
\caption{Position of the exact ratio $\alpha=\tilde z(1-\rho)$ inside
the proved bracket $[1,\kappa_\infty(V)]$, over 1000 random $(Q,R)$
pairs, grouped by weight ratio $Q/R$. Unlike a calibrated point
estimate, the bound is satisfied in every regime without exception,
including $Q/R<0.1$, the regime in which a fixed-$\alpha$ estimate
degrades to a $49.52\%$ worst-case error.}
\vspace{5mm}
\label{tab:bound_regimes}
\renewcommand{\arraystretch}{1.25}
\begin{tabular}{|l|c|c|c|c|}
\hline
\textbf{Range of $Q/R$} & \textbf{Points} & $\boldsymbol{\alpha_{\min}}$ & $\boldsymbol{\alpha_{\max}}$ & \textbf{Violations} \\
\hline
$Q/R < 0.01$          & 167 & 1.56 & 3.14 & 0/167 \\
\hline
$0.01 \le Q/R < 0.1$  & 156 & 1.58 & 2.98 & 0/156 \\
\hline
$0.1 \le Q/R < 1$     & 190 & 1.60 & 3.20 & 0/190 \\
\hline
$1 \le Q/R < 10$      & 171 & 1.59 & 3.55 & 0/171 \\
\hline
$10 \le Q/R < 100$    & 138 & 2.05 & 3.73 & 0/138 \\
\hline
$Q/R \ge 100$         & 178 & 1.23 & 3.73 & 0/178 \\
\hline
\end{tabular}
\end{table}

The contrast with a calibrated point estimate is precisely the point.
Fixing a single $\alpha\approx1.50$ and using it for every ratio
degrades sharply below $Q/R=0.1$, reaching a $49.52\%$ worst-case
error, because a point estimate cannot track a quantity that genuinely
moves with $Q/R$. The bound of Proposition~\ref{prop:two_sided_bound}
does not suffer from this: it does not claim a single value for
$\alpha$, only that $\alpha$ lies in $[1,\kappa_\infty(V)]$, and this
claim is verified without exception across every regime in
Table~\ref{tab:bound_regimes}, including the small-ratio regime that
breaks a fixed-$\alpha$ estimate.

\begin{remark}[Why a fixed-$\alpha$ estimate degrades at small $Q/R$,
and why the bound does not]
\label{rem:zbar_breakdown}
A point estimate of $\alpha$ rests on a single assumption: that one
eigenvalue of $A_{cl}$, the spectral radius $\rho$, dominates all the
others strongly enough that the error dynamics behave, after a few
steps, like the one-dimensional case, with a fixed multiplicative
constant correcting for the departure from that ideal. When $R$
heavily outweighs $Q$, the controller applies very little correction,
several eigenvalues of $A_{cl}$ end up close to one another instead of
one standing clearly apart, and the true value of $\alpha$ drifts, so a
constant fitted elsewhere no longer applies. Table~\ref{tab:bound_regimes}
shows this drift directly: $\alpha$ ranges from $1.23$ to $3.73$
depending on the regime, which is exactly the variation a single
calibrated constant cannot capture. The bound of
Proposition~\ref{prop:two_sided_bound} is unaffected by this drift
because it was never a point estimate: $\kappa_\infty(V)$ is
recomputed exactly for every $(Q,R)$ from the same eigendecomposition
that gives $\rho$, so the upper branch moves together with $\alpha$
instead of being fixed in advance. In practice this drift is not a
serious limitation for the lower branch, since $Q/R<0.1$ corresponds
to a controller that barely corrects the trajectory at all, a regime
with little engineering interest.
\end{remark}

Given that a single evaluation of \eqref{eq:two_sided_bound} requires
only the eigendecomposition of the $n\times n$ matrix $A_{cl}$ already
computed to obtain $\rho$, against a full closed-loop simulation for
the exact value, the bound gives, at essentially the same computational
cost as a calibrated point estimate, a guarantee the point estimate
could not offer: both branches hold for every $(Q,R)$, not only for
the ratios on which a constant happened to be calibrated. In fact, the
same closed-loop error recursion that gives the two-sided bound also
admits a direct, exact solution, without any need for an inequality at
all.

\vspace{3mm}

\begin{remark}[Towards an exact margin]
\label{rem:exact_margin}
Unrolling the error recursion \eqref{eq:err_dyn_recall} from $e_0=0$
shows that $\bar z_j$ in fact has a closed form,
\begin{equation}
\bar z_j = \sum_{i\ge0}\sum_l \bigl|[A_{cl}^{\,i}]_{jl}\bigr|\,\bar w_l,
\end{equation}
a convergent series rather than an inequality, whose supremum is
attained by a specific adversarial disturbance sequence. A Monte-Carlo
sequence reproduces this exact sign pattern with probability zero, so
the simulation-based estimate of $\bar z$ used earlier in this section
is, almost surely, a strict underestimate rather than a merely
approximate one. With a certified bound on the truncation of the
series, this exact value can be evaluated more cheaply than the DARE
solve it depends on, which removes the practical motivation for using
an approximate formula at all. The tube construction also imposes a
second tightening, on the input rather than the corridor, not
discussed here: the ancillary correction $-Ke_k$ must itself be
reserved out of $u_{\max}$, and this reservation can be severe in
practice, growing sharply with the sampling period. Both points are
left for future work.
\end{remark}

\section{Extension to an Arbitrary Reference Trajectory}
\label{sec:extension}

The sections above build the entire Tube MPC framework around a single
fixed reference: a circular orbit, linearised once and for all into
the CWH model. This section verifies that the same approach extends to
an arbitrary reference trajectory, one that is no longer a fixed orbit
but evolves in an unconstrained way over time, through a linearisation
recomputed at every instant along the path.

\subsection{Generating a Reference Trajectory Without a Closed Form}
\label{sec:ref_trajectory_generation}

To obtain a reference trajectory with no simple analytical expression,
we simulate a point mass subject to Earth's gravity and to a
constant-magnitude thrust whose direction rotates over time:
\[
  \ddot{\bm r}(t) = -\frac{\mu}{\Vert\bm r(t)\Vert^3}\bm r(t)
  + a_0\begin{pmatrix}\cos(\omega t)\\ \sin(\omega t)\cos\iota\\ \sin(\omega t)\sin\iota\end{pmatrix},
  \qquad
  \bm r(t) = \begin{pmatrix}x(t)\\y(t)\\z(t)\end{pmatrix} \in \R^3,
\]
where $\mu = 3.986\times10^{14}$\,m$^3$/s$^2$ is Earth's gravitational
parameter, $a_0 = 0.5g$ is the constant thrust magnitude, $\omega =
0.1$\textdegree/s is the angular rate at which the thrust direction
rotates, and $\iota = 25$\textdegree{} is an inclination angle ensuring
the thrust sweeps out of the initial orbital plane, producing a
genuinely three-dimensional trajectory.

This system is nonlinear (through the $\Vert\bm r\Vert^{-3}$ term) and
non-autonomous (the thrust depends explicitly on $t$), and therefore
admits no closed-form solution. It is integrated numerically, starting
from a circular orbit at 500\,km altitude, producing a sequence of
reference positions $\bm r_0^{\rm ref}, \bm r_1^{\rm ref}, \ldots,
\bm r_k^{\rm ref}, \ldots$, tracing a spiral trajectory with no
closed-form expression.

\subsection{Linearising the Dynamics Along the Trajectory}
\label{sec:linearising_along_trajectory}

Recall the dynamics under gravity alone,
$\bm a_{\rm grav}(\bm r) = -\mu\bm r/\Vert\bm r\Vert^3$, nonlinear in
$\bm r$. For CWH, this nonlinearity was handled once and for all by
linearising around a fixed point of the circular reference orbit.
Here, the reference $\bm r_k^{\rm ref}$ changes at every step $k$, so
this linearisation must be recomputed at every instant, around the
current reference point.

We seek an affine approximation of $\bm a_{\rm grav}$ in the vicinity
of a point $\bm r_0$:
\[
  \bm a_{\rm grav}(\bm r_0+\delta\bm r) \approx
  \bm a_{\rm grav}(\bm r_0) + J_g(\bm r_0)\,\delta\bm r,
\]
where $J_g(\bm r_0)\in\R^{3\times3}$ is the Jacobian of
$\bm a_{\rm grav}$, the matrix of partial derivatives
$(J_g)_{ij} = \partial a_{{\rm grav},i}/\partial r_j$. Writing out the
first component, $a_{{\rm grav},x} = -\mu x/\Vert\bm r\Vert^3$, by the
quotient rule:
\[
  \frac{\partial a_{{\rm grav},x}}{\partial x}
  = -\frac{\mu}{\Vert\bm r\Vert^3} + \frac{3\mu x^2}{\Vert\bm r\Vert^5},
  \qquad
  \frac{\partial a_{{\rm grav},x}}{\partial y}
  = \frac{3\mu\,xy}{\Vert\bm r\Vert^5},
  \qquad
  \frac{\partial a_{{\rm grav},x}}{\partial z}
  = \frac{3\mu\,xz}{\Vert\bm r\Vert^5}.
\]
The same computation for the remaining components, by the symmetric
roles of $x$, $y$, $z$ in $\Vert\bm r\Vert$, gives
\[
  \frac{\partial a_{{\rm grav},y}}{\partial y}
  = -\frac{\mu}{\Vert\bm r\Vert^3} + \frac{3\mu y^2}{\Vert\bm r\Vert^5},
  \qquad
  \frac{\partial a_{{\rm grav},z}}{\partial z}
  = -\frac{\mu}{\Vert\bm r\Vert^3} + \frac{3\mu z^2}{\Vert\bm r\Vert^5},
\]
and, for every off-diagonal pair $i\neq j$ with $i,j\in\{x,y,z\}$,
\[
  \frac{\partial a_{{\rm grav},i}}{\partial r_j}
  = \frac{3\mu\,r_ir_j}{\Vert\bm r\Vert^5}.
\]
Collecting all nine partial derivatives into a single matrix, and
writing $\bm r=(x,y,z)$ explicitly, gives the gravity Jacobian in full:
\begin{equation}
  \label{eq:gravity_jacobian}
  J_g(\bm r) =
  \begin{pmatrix}
    -\dfrac{\mu}{\Vert\bm r\Vert^3}+\dfrac{3\mu x^2}{\Vert\bm r\Vert^5}
      & \dfrac{3\mu xy}{\Vert\bm r\Vert^5}
      & \dfrac{3\mu xz}{\Vert\bm r\Vert^5} \\[3mm]
    \dfrac{3\mu xy}{\Vert\bm r\Vert^5}
      & -\dfrac{\mu}{\Vert\bm r\Vert^3}+\dfrac{3\mu y^2}{\Vert\bm r\Vert^5}
      & \dfrac{3\mu yz}{\Vert\bm r\Vert^5} \\[3mm]
    \dfrac{3\mu xz}{\Vert\bm r\Vert^5}
      & \dfrac{3\mu yz}{\Vert\bm r\Vert^5}
      & -\dfrac{\mu}{\Vert\bm r\Vert^3}+\dfrac{3\mu z^2}{\Vert\bm r\Vert^5}
  \end{pmatrix}
  = -\frac{\mu}{\Vert\bm r\Vert^3}I_3
  + \frac{3\mu}{\Vert\bm r\Vert^5}\,\bm r\,\bm r^\top,
\end{equation}
where $I_3$ is the $3\times3$ identity and $\bm r\bm r^\top$ is the
outer product of $\bm r$ with itself, with entries
$(\bm r\bm r^\top)_{ij} = r_ir_j$. This matrix is symmetric, a direct
consequence of the symmetry of $\bm r\bm r^\top$. The first term is
isotropic, reflecting an identical variation of gravity in every
direction; the second, proportional to $\bm r\bm r^\top$, reinforces
specifically the sensitivity along the radial direction. In practice,
$J_g$ must be re-evaluated at every step $k$, at the current reference
point $\bm r = \bm r_k^{\rm ref}$.

\subsubsection*{Assembling $A_c(t_k)$ and $B_c$}

Let $\bm\eta = (\bm r,\bm v)^\top \in \R^6$ denote the full
position-velocity state. Its evolution is governed by two relations:
the kinematic identity $\dot{\bm r} = \bm v$, and the linearised
dynamics $\dot{\bm v} \approx J_g(\bm r_k^{\rm ref})\,\delta\bm r +
\bm u$, where $\bm u$ is the thrust commanded by the controller.
Writing $\dot{\bm\eta} = A_c(t_k)\bm\eta + B_c\bm u$ and identifying
each $3\times3$ block of $A_c(t_k)$ with the corresponding partial
dependence:
\begin{itemize}
\item the $(\bm r,\bm r)$ block is $0_3$, since $\dot{\bm r}=\bm v$
      does not depend on $\bm r$;
\item the $(\bm r,\bm v)$ block is $I_3$, since $\dot{\bm r}=\bm v$
      depends on $\bm v$ with unit coefficient on each component;
\item the $(\bm v,\bm r)$ block is $J_g(\bm r_k^{\rm ref})$, the
      gravity Jacobian derived in \eqref{eq:gravity_jacobian};
\item the $(\bm v,\bm v)$ block is $0_3$, since the linearised gravity
      does not depend on velocity;
\end{itemize}
gives, written out in full and by direct analogy with the
construction of $A_c$ for CWH,
\begin{equation}
  \label{eq:Ac_time_varying}
  A_c(t_k) =
  \left(
  \begin{array}{ccc|ccc}
    0 & 0 & 0 & 1 & 0 & 0 \\
    0 & 0 & 0 & 0 & 1 & 0 \\
    0 & 0 & 0 & 0 & 0 & 1 \\ \hline
    (J_g)_{11} & (J_g)_{12} & (J_g)_{13} & 0 & 0 & 0 \\
    (J_g)_{21} & (J_g)_{22} & (J_g)_{23} & 0 & 0 & 0 \\
    (J_g)_{31} & (J_g)_{32} & (J_g)_{33} & 0 & 0 & 0
  \end{array}
  \right) \in \R^{6\times6},
\end{equation}
where each $(J_g)_{ij}$ is one of the nine explicit entries of
\eqref{eq:gravity_jacobian}, evaluated at $\bm r = \bm r_k^{\rm ref}$.
The same reasoning applied to the input matrix $B_c$ gives its two
blocks: the commanded thrust $\bm u$ has no direct effect on position
($(\bm r,\bm u)$ block equal to $0_3$), but acts directly as an
acceleration, one component at a time ($(\bm v,\bm u)$ block equal to
$I_3$), so that
\begin{equation}
  \label{eq:Bc_definition}
  B_c =
  \left(
  \begin{array}{ccc}
    0 & 0 & 0 \\
    0 & 0 & 0 \\
    0 & 0 & 0 \\ \hline
    1 & 0 & 0 \\
    0 & 1 & 0 \\
    0 & 0 & 1
  \end{array}
  \right) \in \R^{6\times3}.
\end{equation}
Unlike CWH, where $A_c$ depends only on the constant orbital rate $n$,
every entry of $A_c(t_k)$ inherited from $J_g$ depends explicitly on
the current reference position $\bm r_k^{\rm ref}$, so this produces a
sequence of distinct matrices $A_c(t_0), A_c(t_1), \ldots$, one per
step of the simulated trajectory, while $B_c$ itself remains constant,
since it does not depend on position.

\subsubsection*{Discretisation}
\vspace{-1mm}
Each pair $(A_c(t_k), B_c)$ is discretised by the exact matrix
exponential, as for CWH, but recomputed at every instant:
\begin{equation}
  \label{eq:Ad_time_varying}
  A_d(t_k) = e^{A_c(t_k)\,T_s},
  \qquad
  B_d(t_k) = \int_0^{T_s} e^{A_c(t_k)\tau}\,B_c\,\mathrm d\tau,
\end{equation}
both obtained in a single matrix exponentiation of the augmented block
$\begin{pmatrix}A_c(t_k) & B_c\\ 0 & 0\end{pmatrix}$, where $T_s$ is
the controller's sampling period. Over the simulated trajectory this
produces 361 distinct pairs $(A_d(t_k), B_d(t_k))$, one per step, in
place of the single fixed pair used for CWH.

\subsection{A Time-Varying Tube MPC}
\label{sec:time_varying_tube_mpc}

The Tube MPC gain, normally the unique solution of the DARE for a
fixed dynamics matrix,
\vspace{-2mm}
\[
  P = Q + A_d^\top PA_d - A_d^\top PB_d(R+B_d^\top PB_d)^{-1}B_d^\top PA_d,
  \qquad
  K = (R+B_d^\top PB_d)^{-1}B_d^\top PA_d,
\]
must instead be solved locally at every step $k$, using
$(A_d(t_k), B_d(t_k))$:
\begin{equation}
  \label{eq:local_dare}
  P_k = Q + A_d(t_k)^\top P_k A_d(t_k)
  - A_d(t_k)^\top P_k B_d(t_k)\bigl(R+B_d(t_k)^\top P_k B_d(t_k)\bigr)^{-1}
    B_d(t_k)^\top P_k A_d(t_k),
\end{equation}
\begin{equation}
  \label{eq:local_gain}
  K(t_k) = \bigl(R+B_d(t_k)^\top P_k B_d(t_k)\bigr)^{-1}B_d(t_k)^\top P_k A_d(t_k),
\end{equation}
producing a gain specific to every instant, solved 361 times rather
than once. The applied control retains exactly the form of the
time-invariant Tube MPC:
\begin{equation}
  \label{eq:control_time_varying}
  u_k = v_k - K(t_k)\,e_k, \qquad e_k = x_k - z_k,
\end{equation}
where $v_k$ is the nominal control obtained from the condensed QP
built locally from $(A_d(t_k), B_d(t_k))$ over a receding horizon of
$N$ steps.
\vspace{-1mm}
\subsection{Numerical Validation}
\label{sec:extension_validation}

We reuse the robustness test parameters already employed in Sections~5
and~6: horizon $N=100$, sampling period $T_s=10$\,s, safety corridor
$\varepsilon=100$\,m, and disturbance bounds
$\bar w_{\rm pos}=10$\,m, $\bar w_{\rm vel}=1$\,m/s. Thirty Monte Carlo
simulations, each with an independent disturbance realisation
$\xi_k\sim\mathcal U([-\bar w,\bar w])$, verify that
$\Vert x_k - x_k^{\rm ref}\Vert \leq \varepsilon$ at every step.

\begin{figure}[H]
\centering
\includegraphics[width=0.6\textwidth]{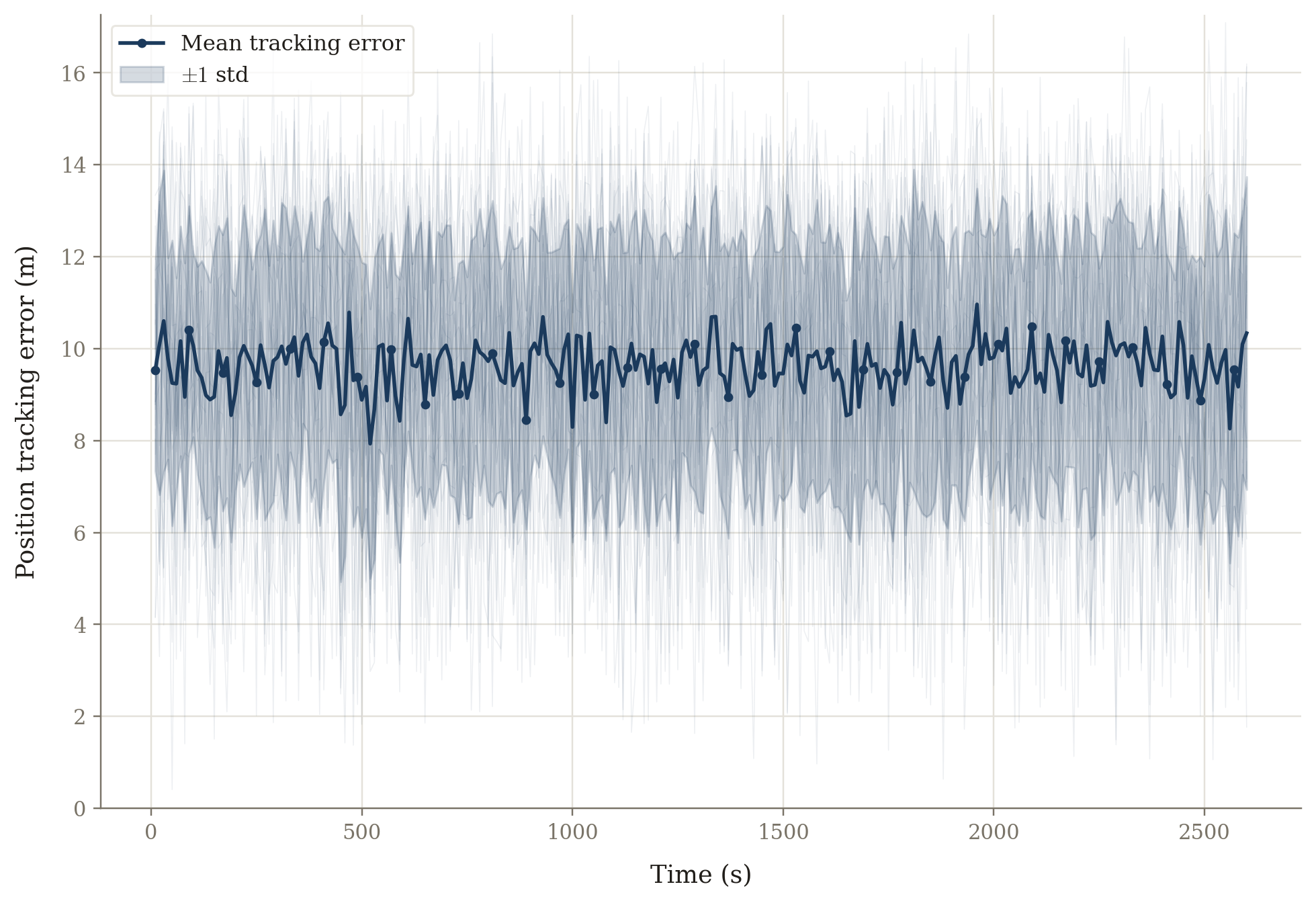}
\caption{Position tracking error over time, 30 Monte Carlo runs,
reference trajectory generated by gravity and rotating thrust
($N=100$, $T_s=10$\,s, $\bar w_{\rm pos}=10$\,m, $\bar w_{\rm
vel}=1$\,m/s). Thin lines: individual runs. Bold line: mean over 30
runs. Shaded band: one standard deviation. The chaser starts with an
initial offset of about 25.5\,m from the reference and converges
towards a steady-state tracking error, well inside the 100\,m safety
corridor throughout.}
\label{fig:monte_carlo_corridor}
\end{figure}

\vspace{-4mm}

\begin{table}[H]
\centering
\caption{Tracking results over 30 Monte Carlo runs, reference
trajectory generated by gravity and rotating thrust, $N=100$,
$T_s=10$\,s, $\varepsilon=100$\,m.}
\renewcommand{\arraystretch}{1.3}
\vspace{6mm}
\begin{tabular}{lc}
\hline
\textbf{Metric} & \textbf{Value} \\
\hline
Final error (mean $\pm$ std) & $10.34 \pm 3.40$\,m \\
Maximum error (mean $\pm$ std) & $16.04 \pm 0.58$\,m \\
Maximum error observed & $17.09$\,m \\
Corridor violations (out of 30) & $0$ (0\%) \\
\hline
\end{tabular}
\end{table}

Across all 30 realisations, the chaser never leaves the safety
corridor, with a comfortable margin: the largest observed error
(17.09\,m) represents only 17\% of the allowed corridor. This confirms
that the local linearisation approach, recomputed at every instant
along an arbitrary reference trajectory, preserves the same robustness
guarantee established for the standard CWH manoeuvres, without
requiring any further reformulation of the controller.

\section{Towards a Flight-Ready Application}
\label{sec:embedded_no_libs}

All numerical results presented so far rely on \texttt{numpy} and
\texttt{scipy} for the underlying linear algebra and numerical
integration, a natural choice for the exploratory work carried out
throughout this report. It does not, however, reflect the constraints
under which real flight software is developed. In an embedded avionics
context, as is standard practice at spacecraft manufacturers such as
ArianeGroup, the onboard flight computer runs certified,
resource-constrained software, and a general-purpose scientific library
is almost never available on the target hardware: every line of code
running on board must be reviewed and certified, and a library such as
\texttt{scipy} is far too large and general to be certified as a whole.
Every numerical routine actually used by the guidance software must
therefore be written from scratch, typically in C or Ada, using only
basic arithmetic and explicit loops.

To assess how much of the Tube MPC + ADMM pipeline would need to be
re-implemented under such constraints, every routine relying on
\texttt{numpy}, \texttt{scipy}, or \texttt{math} was rewritten from
first principles. None of the underlying numerical methods are new. In
particular, the linear solver used throughout this section rests on the
following classical result.

\vspace{3mm}
\begin{theorem}[Cholesky, \cite{Golub1996}]
\label{thm:cholesky}
Let $A\in\R^{n\times n}$ be symmetric ($A^\top=A$) and positive
definite ($\bm x^\top A\bm x>0$ for every nonzero $\bm x\in\R^n$).
Then there exists a unique lower-triangular matrix $L\in\R^{n\times n}$
with strictly positive diagonal entries such that $A=LL^\top$.
\end{theorem}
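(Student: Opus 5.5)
I will prove existence by induction on $n$, using the bordered block decomposition that also gives the classical column-by-column algorithm, and then prove uniqueness separately.

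\textbf{Base case.} For $n=1$, positive definiteness gives $A=(a_{11})$ with $a_{11}>0$. The only lower-triangular factor with positive diagonal is $L=(\sqrt{a_{11}})$.

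\textbf{Inductive step: partition.} For $n>1$, I write
\[
A=\begin{pmatrix}\alpha & \bm a^\top\\ \bm a & A_{22}\end{pmatrix},
\qquad \alpha=\bm e_1^\top A\bm e_1>0,
\]
with $\bm a\in\R^{n-1}$ and $A_{22}\in\R^{(n-1)\times(n-1)}$.

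\textbf{Inductive step: ansatz.} I seek $L$ in the form
\[
L=\begin{pmatrix}\sqrt\alpha & 0\\ \bm a/\sqrt\alpha & L_{22}\end{pmatrix}.
\]
Expanding $LL^\top$ shows that $A=LL^\top$ holds exactly when
\[
L_{22}L_{22}^\top=S, \qquad S=A_{22}-\bm a\bm a^\top/\alpha,
\]
where $S$ is the Schur complement.

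\textbf{Main obstacle: $S$ is symmetric positive definite.} Symmetry is immediate. For positive definiteness, take any nonzero $\bm y\in\R^{n-1}$ and test $A$ on $\bm x=(-\bm a^\top\bm y/\alpha,\ \bm y)^\top$, which is nonzero. A direct expansion gives
\[
\bm x^\top A\bm x=\alpha t^2+2t\,\bm a^\top\bm y+\bm y^\top A_{22}\bm y \quad\text{with } t=-\bm a^\top\bm y/\alpha,
\]
and substituting $t$ yields
\[
\bm x^\top A\bm x=\bm y^\top A_{22}\bm y-(\bm a^\top\bm y)^2/\alpha=\bm y^\top S\bm y.
\]
Hence $\bm y^\top S\bm y>0$. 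The induction hypothesis then supplies $L_{22}$ lower triangular with positive diagonal. The resulting $L$ is lower triangular with positive diagonal, which proves existence.

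\textbf{Uniqueness.} Suppose $LL^\top=MM^\top$ with $L,M$ both lower triangular with positive diagonal; both are invertible. Then
\[
M^{-1}L=M^\top L^{-\top}.
\]
The left side is lower triangular and the right side is upper triangular, so both equal a diagonal matrix $D$ with positive entries. From $L=MD$ we get $MM^\top=LL^\top=MD^2M^\top$, so $D^2=I$. Positivity then forces $D=I$, hence $L=M$.

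Alternatively, uniqueness follows from the same induction: the first column of $L$ is forced to be $(\sqrt\alpha,\ \bm a/\sqrt\alpha)^\top$, and $L_{22}$ is then forced by uniqueness for $S$. I will give the triangular-matrix argument as the main proof, since it is the cleaner of the two.
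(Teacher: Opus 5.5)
Your proof is correct, and it supplies something the paper does not: the paper gives no proof of this statement at all. It only cites Golub and Van Loan, where the standard argument goes through the $LDL^\top$ factorisation. There, a symmetric positive-definite matrix has all leading principal submatrices nonsingular, hence $A=\tilde L D\tilde L^\top$ with $\tilde L$ unit lower triangular and $D=\mathrm{diag}(d_1,\ldots,d_n)$ positive; one sets $L=\tilde L\,\mathrm{diag}(\sqrt{d_1},\ldots,\sqrt{d_n})$, and uniqueness is inherited from that of $LDL^\top$. You argue instead by bordering. Positive definiteness of the Schur complement $S=A_{22}-\bm a\bm a^\top/\alpha$, obtained by testing $A$ on $(-\bm a^\top\bm y/\alpha,\ \bm y)^\top$, drives the induction, and uniqueness comes from $M^{-1}L=M^\top L^{-\top}$ being both lower and upper triangular. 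Your route is self-contained and needs no elimination theory. It is also the argument that actually justifies Algorithm~\ref{alg:cholesky}: the pivot at stage $i$ of your recursion is exactly the quantity $A_{ii}-\sum_{k<i}L_{ik}^2$ whose square root the algorithm takes. Your Schur-complement lemma is therefore what guarantees that the algorithm never meets a nonpositive radicand or a zero divisor $L_{jj}$, a fact the paper uses without stating. The $LDL^\top$ route buys a direct connection to Gaussian elimination and gets uniqueness for free from an existing theorem, at the price of relying on that machinery. Finally, Algorithm~\ref{alg:cholesky} fills $L$ row by row, whereas your induction builds it column by column (outer-product form); by the uniqueness you prove, the two orderings produce the same factor.
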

\vspace{2mm}
Both hypotheses hold for every matrix factorised in this report:
symmetry follows from the definitions of $H$, $R$ and $P$
(Sections~\ref{sec:linear_mpc} and~\ref{sec:tube_mpc_recall}), and
positive definiteness from $R\succ0$, as established for the matrix $M$
inside Algorithm~\ref{alg:riccati} below. The Riccati iteration and
Runge-Kutta integration used below are likewise textbook material, left
unchanged from their standard formulation: the purpose here is one of
implementation, not algorithmic contribution, demonstrating that the
pipeline can be made fully self-contained with traceable, line-by-line
behaviour of the kind expected during flight software qualification.

Not every rewritten routine required a genuine algorithm. Basic array
operations, such as constructing a zero or identity matrix, transposing
a matrix, or computing a matrix-vector product, are direct, mechanical
applications of their definition and are omitted below for brevity.
Three routines, by contrast, compute a quantity with no closed-form
expression and require a genuine iterative process or factorisation;
these are presented in detail in the remainder of this section.

\subsection{Algorithm 1: Solving a Symmetric Positive-Definite Linear System}
At every ADMM iteration \eqref{eq:admm_v}, the $\bm v$-update requires
solving a linear system $(H+\rho I)\bm v = \bm b$, where $H+\rho I$ is
symmetric positive definite. This step replaces
\texttt{np.linalg.solve} and \texttt{scipy.linalg.cho\_factor} /
\texttt{cho\_solve}. In the notation of Algorithm~\ref{alg:cholesky}
below, $A = H+\rho I$ is the matrix to factorise, $\bm b$ is the
right-hand side $-f+\rho(\bm z^{(m)}-\bm d^{(m)})$, and $\bm x$ is the
unknown $\bm v^{(m+1)}$ solved for at ADMM iteration $m$. Rather than
inverting the matrix explicitly, the system is solved through a
Cholesky factorisation $A=LL^\top$, originally devised by
André-Louis Cholesky for the solution of normal equations in
geodetic triangulation and published posthumously by his colleague
Commandant Benoît~\cite{Benoit1924}, followed by two triangular solves.

\begin{algorithm}[H]
\caption{Cholesky factorisation and triangular solve}
\label{alg:cholesky}
\begin{algorithmic}[1]
\Require Symmetric positive-definite matrix $A \in \R^{n\times n}$, right-hand side $\bm b \in \R^n$
\Ensure Solution $\bm x$ of $A\bm x = \bm b$
\State Compute the Cholesky factor $L$ such that $A = LL^\top$:
\For{$i = 1$ to $n$}
  \For{$j = 1$ to $i$}
    \State $s \leftarrow \sum_{k=1}^{j-1} L_{ik}L_{jk}$
    \If{$i = j$}
      \State $L_{ii} \leftarrow \sqrt{A_{ii} - s}$
    \Else
      \State $L_{ij} \leftarrow \dfrac{1}{L_{jj}}\bigl(A_{ij} - s\bigr)$
    \EndIf
  \EndFor
\EndFor
\State Solve $L\bm y = \bm b$ by forward substitution:
\For{$i = 1$ to $n$}
  \State $y_i \leftarrow \dfrac{1}{L_{ii}}\Bigl(b_i - \sum_{k=1}^{i-1}L_{ik}y_k\Bigr)$
\EndFor
\State Solve $L^\top\bm x = \bm y$ by backward substitution:
\For{$i = n$ downto $1$}
  \State $x_i \leftarrow \dfrac{1}{L_{ii}}\Bigl(y_i - \sum_{k=i+1}^{n}L_{ki}x_k\Bigr)$
\EndFor
\State \Return $\bm x$
\end{algorithmic}
\end{algorithm}

To illustrate Algorithm~\ref{alg:cholesky} on a readable example, consider a
symmetric positive-definite matrix $A\in\R^{6\times6}$ (the same
dimension as the CWH state) and a right-hand side $\bm b$:
\[
A =
\begin{pmatrix}
9.87 & 1.59 & -0.06 & 1.60 & -1.18 & 0.04 \\
1.59 & 10.90 & -1.03 & 4.35 & 0.17 & -1.84 \\
-0.06 & -1.03 & 8.82 & -0.05 & 4.31 & -2.26 \\
1.60 & 4.35 & -0.05 & 15.42 & -0.07 & -5.03 \\
-1.18 & 0.17 & 4.31 & -0.07 & 14.78 & -3.97 \\
0.04 & -1.84 & -2.26 & -5.03 & -3.97 & 15.42
\end{pmatrix},
\qquad
\bm b =
\begin{pmatrix}
-1.204 \\ 1.462 \\ 1.766 \\ -0.329 \\ 0.841 \\ -0.180
\end{pmatrix}.
\]
Step~1 of Algorithm~\ref{alg:cholesky} produces the lower-triangular
factor
\[
L =
\begin{pmatrix}
3.141 & 0 & 0 & 0 & 0 & 0 \\
0.505 & 3.262 & 0 & 0 & 0 & 0 \\
-0.020 & -0.313 & 2.953 & 0 & 0 & 0 \\
0.510 & 1.253 & 0.119 & 3.685 & 0 & 0 \\
-0.374 & 0.111 & 1.469 & -0.053 & 3.531 & 0 \\
0.012 & -0.564 & -0.825 & -1.148 & -0.779 & 3.534
\end{pmatrix},
\]

which satisfies $\|LL^\top-A\|_\infty=1.8\times10^{-15}$, confirming
the factorisation is correct up to floating-point round-off. Steps~2
and~3 then give the solution vector

\[
\bm x_{\rm manual} =
\begin{pmatrix}
-0.147141 \\ 0.204653 \\ 0.239313 \\ -0.054929 \\ -0.020601 \\ 0.024896
\end{pmatrix},
\qquad
\bm x_{\rm exact} \;(\texttt{np.linalg.solve}) =
\begin{pmatrix}
-0.147141 \\ 0.204653 \\ 0.239313 \\ -0.054929 \\ -0.020601 \\ 0.024896
\end{pmatrix},
\]
with $\|\bm x_{\rm manual}-\bm x_{\rm exact}\|_\infty=5.6\times10^{-17}$,
that is, exact agreement up to machine precision. On the larger
condensed system actually solved at every ADMM iteration ($N=20$,
$n=60$ unknowns), the same agreement holds, at
$1.4\times10^{-17}$. Executing the factorisation and both substitutions
in pure Python loops takes 19.9\,ms on this $n=60$ system, against
0.5\,ms for \texttt{np.linalg.solve}, a factor of roughly 39 that
reflects the overhead of interpreted Python loops rather than any
property of the algorithm itself; the same code compiled in C, as it
would be onboard, closes this gap almost entirely. Algorithm~1 is exact
and requires no tolerance: it terminates after a fixed, predictable
number of operations ($O(n^3)$ for the factorisation, $O(n^2)$ for each
substitution), a property valued in flight software, where worst-case
execution time must be bounded in advance.

\subsection{Algorithm 2: Solving the Discrete Algebraic Riccati Equation}

The routine \texttt{scipy.linalg.solve\_discrete\_are} is replaced
here, since no closed-form expression gives $P$ directly from
\eqref{eq:DARE}. Both $Q$ and $R$ are symmetric by construction
(Section~\ref{sec:linear_mpc}), and so is $P$ at every stage of the
iteration described below, a property that follows directly from the
symmetry of $Q$ and is preserved by the Riccati map at each step.
Starting from $P^{(0)}=Q$, the Riccati map is applied repeatedly until
convergence, with each iteration relying on
Algorithm~\ref{alg:cholesky} to evaluate the matrix inverse term,
which itself requires a symmetric positive-definite matrix as input.

\begin{algorithm}[H]
\caption{Riccati fixed-point iteration for the DARE}
\label{alg:riccati}
\begin{algorithmic}[1]
\Require System matrices $A_d, B_d$, weight matrices $Q \succeq 0$, $R \succ 0$, tolerance $\varepsilon$, maximum iterations $I_{\max}$
\Ensure Solution $P$ of the DARE \eqref{eq:DARE}
\State $P \leftarrow Q$
\For{$i = 1$ to $I_{\max}$}
  \State $M \leftarrow R + B_d^\top P B_d$
  \State Solve $M\, K_{\rm term} = B_d^\top P A_d$ for $K_{\rm term}$ using Algorithm~\ref{alg:cholesky}
  \State $P_{\rm new} \leftarrow Q + A_d^\top P A_d - A_d^\top P B_d\, K_{\rm term}$
  \If{$\|P_{\rm new} - P\|_\infty < \varepsilon$}
    \State \Return $P_{\rm new}$
  \EndIf
  \State $P \leftarrow P_{\rm new}$
\EndFor
\State \Return $P$
\end{algorithmic}
\end{algorithm}

Before applying Algorithm~\ref{alg:cholesky} inside the Riccati
iteration, it is worth verifying that the matrix $M=R+B_d^\top PB_d$ 
solved at each step does satisfy the required hypothesis, namely that
$M$ is symmetric positive definite. Symmetry follows directly from
that of $R$ and $P$: taking the transpose of $M$ and using
$(B_d^\top PB_d)^\top = B_d^\top P^\top B_d$,
\[
M^\top = R^\top + B_d^\top P^\top B_d = R + B_d^\top P B_d = M,
\]
since $R^\top=R$ and $P^\top=P$ by construction. Positive definiteness
follows from $R\succ0$ together with $B_d^\top PB_d\succeq0$ whenever
$P\succeq0$, since for any nonzero $\bm x$,
\[
\bm x^\top M\bm x = \underbrace{\bm x^\top R\bm x}_{>0}
+ \underbrace{\bm x^\top B_d^\top PB_d\bm x}_{\ge0} > 0.
\]
Both properties are therefore guaranteed at every iteration, and
Algorithm~\ref{alg:cholesky} can be applied to $M$ without further
justification.

To illustrate Algorithm~\ref{alg:riccati} concretely, consider a
six-dimensional CWH system, with state weight
$Q=3\times10^2\,\mathrm{diag}(100,100,100,1,1,1)$ and control weight
$R=100I_3$. Starting from $P^{(0)}=Q$ and iterating according to
Algorithm~\ref{alg:riccati} with tolerance $\varepsilon=10^{-10}$, the
sequence $P^{(i)}$ converges after 366 steps to
\[
P_{\rm manual} =
\begin{pmatrix}
30302.0 & 0 & 0 & 1510.0 & 5.6 & 0 \\
0 & 30302.0 & 0 & -5.6 & 1510.0 & 0 \\
0 & 0 & 30302.0 & 0 & 0 & 1510.0 \\
1510.0 & -5.6 & 0 & 7851.7 & 0 & 0 \\
5.6 & 1510.0 & 0 & 0 & 7851.9 & 0 \\
0 & 0 & 1510.0 & 0 & 0 & 7851.9
\end{pmatrix},
\]
to be compared with the reference value obtained from
\texttt{scipy.linalg.solve\_discrete\_are},
\[
P_{\rm scipy} =
\begin{pmatrix}
30302.0 & -0.001 & 0 & 1510.0 & 5.6 & 0 \\
-0.001 & 30302.0 & 0 & -5.6 & 1510.0 & 0 \\
0 & 0 & 30302.0 & 0 & 0 & 1510.0 \\
1510.0 & -5.6 & 0 & 7851.7 & -0.021 & 0 \\
5.6 & 1510.0 & 0 & -0.021 & 7851.9 & 0 \\
0 & 0 & 1510.0 & 0 & 0 & 7851.9
\end{pmatrix},
\]
which agree to within $\|P_{\rm manual}-P_{\rm scipy}\|_\infty=3.6\times10^{-6}$,
already six orders of magnitude below the entries of $P$. Note that
this final gain $K$ is distinct from the intermediate $K_{\rm term}$
solved for at every iteration inside Algorithm~\ref{alg:riccati}: the
latter is discarded once $P_{\rm new}$ has been formed and is never
reused, whereas $K$ is computed only once, after convergence, by
applying
\[
K = (R + B_d^\top P B_d)^{-1} B_d^\top P A_d
\]
to the final value $P_{\rm manual}$ rather than to an intermediate
iterate. The gain $K$ derived this way from $P$, which is the quantity
actually used in the control law $u_k=v_k-Ke_k$, agrees even more
closely: comparing
\[
K_{\rm manual} =
\begin{pmatrix}
0.0196 & -0.0001 & 0 & 0.198 & 0.0007 & 0 \\
0.0001 & 0.0196 & 0 & -0.0007 & 0.198 & 0 \\
0 & 0 & 0.0196 & 0 & 0 & 0.198
\end{pmatrix}
\]

with

\[
K_{\rm scipy} =
\begin{pmatrix}
0.0196 & -0.0001 & 0 & 0.198 & 0.0007 & 0 \\
0.0001 & 0.0196 & 0 & -0.0007 & 0.198 & 0 \\
0 & 0 & 0.0196 & 0 & 0 & 0.198
\end{pmatrix}
\]

\vspace{4mm}
gives $\|K_{\rm manual}-K_{\rm scipy}\|_\infty=8.8\times10^{-13}$, that
is, agreement to thirteen significant digits. Table~\ref{tab:dare_tol}
reports how the error on $P$ and the required number of iterations
depend on the chosen tolerance $\varepsilon$.

\vspace{3mm}

\begin{table}[H]
\centering
\caption{Convergence of Algorithm~\ref{alg:riccati} for different
tolerances $\varepsilon$, on the six-dimensional CWH system.}
\vspace{3mm}
\renewcommand{\arraystretch}{1.3}
\begin{tabular}{|c|c|c|l|}
\hline
\textbf{Tolerance $\varepsilon$} & \textbf{Iterations} & \textbf{Error on $P$} & \textbf{Comment} \\
\hline
\rowcolor{green!15}
$10^{-6}$  & 261 & $1.5\times10^{-5}$ & \textbf{recommended}: best accuracy-to-cost ratio \\
\hline
$10^{-8}$  & 318 & $3.7\times10^{-6}$ & modest accuracy gain, 22\% more iterations \\
\hline
$10^{-10}$ & 366 & $3.6\times10^{-6}$ & marginal accuracy gain, 40\% more iterations \\
\hline
$10^{-12}$ & $>2000$ & $3.6\times10^{-6}$ & stagnates on floating-point round-off \\
\hline
\end{tabular}
\label{tab:dare_tol}
\end{table}
Tightening $\varepsilon$ below $10^{-6}$ brings only marginal improvement
in accuracy: the error on $P$ moves from $1.5\times10^{-5}$ to
$3.7\times10^{-6}$ between $\varepsilon=10^{-6}$ and $\varepsilon=10^{-8}$,
a modest gain paid for with 57 additional iterations, and stagnates
entirely below $\varepsilon=10^{-10}$. Pushing further to
$\varepsilon=10^{-12}$ fails to converge within 2000 iterations, for no
gain in accuracy on $P$ at all. The recommended setting is therefore
$\varepsilon=10^{-6}$ with $I_{\max}=500$, which converges in 261
iterations. Since $P$ is only ever computed offline in the
time-invariant case (Sections~5--7), or once per guidance step in the
time-varying case of Section~\ref{sec:extension}, this cost remains
acceptable within the sampling period $T_s$. Algorithm~2 requires
$O(I\cdot n_x^3)$ operations, dominated by the three $n_x\times n_x$
matrix products performed at every iteration, with $I$ the number of
iterations to convergence; this cost is independent of the horizon $N$,
since neither $A_d$, $B_d$, $Q$ nor $R$ scale with $N$.

\subsection{Complexity and Memory of the Tube MPC / ADMM Pipeline}
\label{sec:complexity}

Before presenting the complexity and memory tables, it is worth fixing
precisely what each symbol below stands for, since several of them
have already appeared with a slightly different scope earlier in this
report.

\begin{itemize}
  \item $n_x$ : dimension of the state vector ($n_x=4$ for the planar
        rendezvous of Section~\ref{sec:num_results}, $n_x=6$ for the
        three-dimensional manoeuvres of Section~\ref{sec:tube_mpc_recall}
        and the arbitrary trajectory of Section~\ref{sec:extension});
  \item $n_u$ : dimension of the control input ($n_u=2$ or $n_u=3$
        respectively);
  \item $N$ : prediction horizon, the number of future steps
        considered at every MPC call;
  \item $M$ : number of ADMM iterations performed at every MPC step
        to solve the condensed QP online;
  \item $I$ : number of Riccati fixed-point iterations
        (Algorithm~\ref{alg:riccati}) required for the DARE solve to
        converge.
\end{itemize}

The role played by $I$, and more generally the split between what is
computed once and what is repeated at every guidance step, is not the
same for the two settings studied in this report:

\begin{itemize}
  \item In the \textbf{time-invariant CWH case}
        (Sections~\ref{sec:num_results} to \ref{sec:simplification}),
        the dynamics matrices $A_d,B_d$ are fixed for the entire
        mission, so the condensed matrices $\mathcal A,\mathcal B,H$,
        the Cholesky factor $L$, and the DARE solution $P,K$ are all
        computed exactly once, before launch; only the ADMM loop and
        the control law genuinely repeat online, at every sampling
        period $T_s$.
  \item In the \textbf{time-varying case} of
        Section~\ref{sec:extension}, the reference trajectory
        $r_k^{\rm ref}$ moves at every step, so $A_d(t_k),B_d(t_k)$
        change as well, and with them $\mathcal A,\mathcal B,H,L,P,K$:
        every quantity that was computed once offline in the CWH case
        must instead be recomputed at every one of the $361$ guidance
        steps used in this section's validation.
\end{itemize}

Table~\ref{tab:cwh_complexity} and Table~\ref{tab:nonlinear_complexity}
report the two regimes separately for this reason.

\subsubsection*{Case 1: Time-invariant CWH dynamics}

\begin{table}[htbp]
\centering
\renewcommand{\arraystretch}{1.4}
\begin{tabular}{|p{7.5cm}|c|c|}
\hline
\textbf{Step} & \textbf{Time} & \textbf{Memory} \\
\hline
\rowcolor{gray!12}
\multicolumn{3}{|l|}{\textit{Computed once, offline, before the mission}} \\
\hline
$A_d,B_d$ (degree-3 truncation) & $O(n_x^3)$ & $O(n_x^2)$ \\
\hline
Condensed $\mathcal A$ (stack of $A_d^m$) & $O(N n_x^3)$ & $O(N n_x^2)$ \\
\hline
Condensed $\mathcal B$ (block lower-triangular) & $O(N^2 n_x^2 n_u)$ & $O(N^2 n_x n_u)$ \\
\hline
Hessian $H=\mathcal B^\top\bar Q\mathcal B+\bar R$ & $O(n_u^2 n_x N^3)$ & $O(n_u^2 N^2)$ \\
\hline
Cholesky factorisation of $H+\rho I$ (Algorithm~1) & $O(n_u^3 N^3)$ & $O(n_u^2 N^2)$ \\
\hline
DARE solve, $I$ Riccati iterations (Algorithm~2) & $O(I\,n_x^3)$ & $O(n_x^2)$ \\
\hline
\rowcolor{gray!12}
\multicolumn{3}{|l|}{\textit{Repeated online, at every guidance step $k$}} \\
\hline
Per-step vector $\bm f$ & $O(n_u n_x N^2)$ & $O(n_u N)$ \\
\hline
ADMM loop, $M$ iterations (Cholesky factor $L$ reused) & $O(M\,n_u^2N^2)$ & $O(n_u N)$ \\
\hline
Control law $\bm u_k=\bm v_0-K\bm e_k$, state update & $O(n_x^2)$ & $O(n_x)$ \\
\hline
\rowcolor{green!25!black!10}
\textbf{Total per MPC step (online)} & \textbf{$O\big(N^2(n_un_x+Mn_u^2)\big)$} & \textbf{$O(n_u^2N^2)$} \\
\hline
\end{tabular}
\caption{Time-invariant CWH case: time complexity and memory footprint
of every step of the dependency-free Tube MPC + ADMM pipeline. All six
offline quantities are computed once, before the mission, and reused
unchanged at every guidance step.}
\label{tab:cwh_complexity}
\end{table}

In this regime, the online cost is dominated by the two triangular
solves inside the ADMM $\bm v$-update, since the Cholesky factorisation
of $H+\rho I$, the single most expensive step in the entire offline
pipeline at $O(n_u^3N^3)$, is performed once and its factor $L$ is
simply reused at every ADMM iteration and at every MPC step: this is
precisely why the online cost, $O(M\,n_u^2N^2)$, is one full power of
$N$ cheaper than the $O(n_u^3N^3)$ factorisation it relies on. For the
largest configuration used in this report ($n_x=6$, $n_u=3$, $N=100$,
$M=100$), this gives an online cost per MPC step of $9.0\times10^6$
elementary operations, against $2.7\times10^7$ operations if the
factorisation were, incorrectly, repeated at every ADMM iteration
instead of being reused: a factor of $300$ in this configuration. The
online memory footprint is likewise dominated by $L$, which must
remain resident onboard between MPC steps: storing only its nonzero
lower-triangular entries, $\frac{n_uN(n_uN+1)}{2}$ scalars, requires
$352.7$\,KB at $n_x=6$, $n_u=3$, $N=100$ (double precision), against
$703.1$\,KB for a dense $n_uN\times n_uN$ matrix that ignores the
triangular structure, a further factor of two saved at no
implementation cost, on top of the block-wise application of $\bar Q$
already discussed in Section~\ref{sec:simplification}.

\subsubsection*{Case 2: Time-varying, nonlinear reference trajectory}

\begin{table}[htbp]
\centering
\renewcommand{\arraystretch}{1.4}
\begin{tabular}{|p{8.5cm}|c|c|}
\hline
\textbf{Step (repeated at every guidance step $k$)} & \textbf{Time} & \textbf{Memory} \\
\hline
Gravity Jacobian $J_g(r_k^{\rm ref})$ (nine explicit entries) & $O(1)$ & $O(1)$ \\
\hline
$A_d(t_k),B_d(t_k)$ (matrix exponential of augmented block) & $O(n_x^3)$ & $O(n_x^2)$ \\
\hline
Condensed $\mathcal A(t_k),\mathcal B(t_k)$ (rebuilt from scratch) & $O(N^2 n_x^2 n_u)$ & $O(N^2 n_x n_u)$ \\
\hline
Hessian $H(t_k)$ (rebuilt from scratch) & $O(n_u^2 n_x N^3)$ & $O(n_u^2 N^2)$ \\
\hline
Cholesky refactorisation of $H(t_k)+\rho I$ & $O(n_u^3 N^3)$ & $O(n_u^2 N^2)$ \\
\hline
DARE re-solve, $I$ Riccati iterations, for $P_k,K(t_k)$ & $O(I\,n_x^3)$ & $O(n_x^2)$ \\
\hline
ADMM loop, $M$ iterations, using the freshly refactored $L$ & $O(M\,n_u^2N^2)$ & $O(n_u N)$ \\
\hline
Control law $\bm u_k=\bm v_k-K(t_k)\bm e_k$, state update & $O(n_x^2)$ & $O(n_x)$ \\
\hline
\rowcolor{green!25!black!10}

\textbf{Total per guidance step} & \textbf{$O(n_u^2N^3(n_x+n_u))$} & \textbf{$O(n_u^2N^2)$} \\

\hline
\end{tabular}
\caption{Time-varying, nonlinear case: time complexity and memory
footprint of every step of the pipeline, all of which are now repeated
at every one of the $361$ guidance steps used in the validation of
this section, since the reference trajectory changes at every step and
none of the offline reuse of Table~\ref{tab:cwh_complexity} is
available.}
\label{tab:nonlinear_complexity}
\end{table}

Compared with the CWH case, the change is structural rather than a
difference in the asymptotic exponents themselves: the first six rows
of Table~\ref{tab:cwh_complexity}, computed once for the whole mission,
must now be recomputed at every one of the $361$ guidance steps, so the
online cost is dominated by the $O(n_u^2N^3(n_x+n_u))$ cost of rebuilding
the Hessian and refactorising it via Cholesky, both $O(N^3)$ and
therefore negligible only when amortised over an entire mission, as in
the CWH case. Numerically, this gives roughly $8.1\times10^7$ operations
per step (of which $5.4\times10^7$ from building $H$ and $2.7\times10^7$
from the Cholesky refactorisation) against $9.0\times10^6$ in the CWH
case: about nine times more expensive per step, consistent with the
observation, already made in Section~\ref{sec:complexity}, that none
of the offline reuse available in the time-invariant case survives
once the reference trajectory itself is time-varying, even though both
settings share the same asymptotic building blocks. The memory
footprint stays the same order, $O(n_u^2N^2)$ dominated by $L$, but is
now overwritten at every step rather than loaded once for the mission.

\newpage

\section{Conclusion}
\label{sec:conclusion}

This report set out to build and stress-test a family of real-time
MPC controllers for autonomous spacecraft rendezvous. Among the four
algorithmic families studied, Tube MPC consistently stood out: it
delivered the most accurate and robust tracking on the original
benchmark, handled all five standard rendezvous manoeuvres with a
single fixed configuration, and kept performing reliably even once the
horizon was pushed far beyond what the original design anticipated.
Lengthening the prediction horizon exposed a real weakness in
gradient-based solvers such as Fast MPC: the marginal stability of the
CWH dynamics makes the underlying optimisation problem increasingly
ill-conditioned as the horizon grows, and solvers relying on exact
linear algebra, ADMM in particular, are simply immune to this effect
while Nesterov's method is not. This distinction, first observed as a
numerical curiosity, explains a large part of the behaviour seen
throughout the report.

The remainder of the work builds on that understanding. A polynomial
approximation of the dynamics matrices made the long-horizon regime
computationally affordable without compromising accuracy, confirmed
across a thousand random test cases. A systematic look at the cost
weights showed that fuel consumption responds strongly to how they are
tuned while tracking accuracy barely moves, and a two-sided, provably
correct bound was derived to bracket the worst-case tracking error
directly from these weights, together with a first characterisation of
the exact closed form that the bound approximates. The framework was
then pushed past its original assumption of a fixed circular orbit: a
reference trajectory with no closed-form description was tracked
reliably once the underlying linearisation was recomputed along the
way, with the safety guarantee holding throughout. Taken together,
these results make a strong case for Tube MPC as the most dependable
option among those considered, combining a real robustness guarantee
with a computational cost realistic for onboard, real-time use.

Finally, the report asked whether this pipeline could actually run on
the hardware it is designed for. The two routines relying on
\texttt{numpy} or \texttt{scipy} inside the real-time loop were
re-implemented from first principles, using only elementary arithmetic
and explicit loops, demonstrating that the Tube MPC + ADMM pipeline
can be made fully self-contained, with traceable, line-by-line
numerical behaviour, at a modest computational cost. A complexity and
memory analysis of the full pipeline showed that the two settings
studied here behave very differently: in the time-invariant CWH case,
the most expensive steps are computed once offline and amortised over
the entire mission, leaving an online cost per guidance step that
scales as $O(N^2(n_un_x+Mn_u^2))$; in the time-varying case, every
one of these steps must be recomputed at each guidance step, pushing
the online cost to $O(n_u^2N^3(n_x+n_u))$, roughly nine times higher
in the configuration tested here. This gap is not a flaw but a direct
and quantified consequence of tracking a reference that changes at
every step, and it is precisely the kind of figure a flight software
budget needs before committing to either regime.

The extension to time-varying reference trajectories was tested on a
single scenario, and a broader sweep across orbital regimes would help
confirm how general the conclusion is. A natural next step would also
be to profile the dependency-free implementation directly on
embedded-class hardware, to confirm that the predicted operation counts
translate into the expected execution times once compiled. The most
natural continuation of this work, however, would be to consolidate
the dependency-free pipeline of Section~\ref{sec:embedded_no_libs}
into a portable implementation in C or Ada, and to deploy it on a real
embedded platform, so that the Tube MPC / ADMM controller can be
tested against an actual physical system rather than a software
simulator. This would close the loop between the theoretical guarantees
established here and the operational constraints of a real guidance
computer, and represents the clearest path from the present report
towards a flight-ready application.

All the code developed for this report is available at
\url{https://github.com/adnanegrb/MPC-spacecraft-guidance}: the MPC
families and solvers, and the dependency-free implementations of
Section~\ref{sec:embedded_no_libs}.

\newpage



\begin{thebibliography}{99}

\bibitem{Breger2008}
L.~Breger and J.~P.~How.
\newblock Safe trajectories for autonomous rendezvous of spacecraft.
\newblock \emph{Journal of Guidance, Control, and Dynamics},
  pages 1478--1489, 2008.

\bibitem{Cavenago2019}
F.~Cavenago, P.~Di~Lizia, M.~Massari and A.~Wittig.
\newblock Sequential convex programming for multimode spacecraft trajectory optimisation.
\newblock In \emph{AIAA/AAS Astrodynamics Specialist Conference}, 2019.

\bibitem{Clohessy1960}
W.~H.~Clohessy and R.~S.~Wiltshire.
\newblock Terminal guidance system for satellite rendezvous.
\newblock \emph{Journal of the Aerospace Sciences}, pages 653--658, 1960.

\bibitem{DiCairano2012}
S.~Di~Cairano, H.~Park and I.~Kolmanovsky.
\newblock Model predictive control approach for guidance of spacecraft rendezvous and proximity maneuvering.
\newblock \emph{International Journal of Robust and Nonlinear Control},
  pages 1398--1427, 2012.

\bibitem{Bashnick2023}
C.~Bashnick and S.~Ulrich.
\newblock Fast model predictive control for spacecraft rendezvous and docking with obstacle avoidance.
\newblock \emph{Journal of Guidance, Control, and Dynamics}, 2023.

\bibitem{Ferranti2015}
L.~Ferranti and T.~Keviczky.
\newblock A parallel dual fast gradient method for MPC applications.
\newblock \href{https://arxiv.org/abs/1503.06330}{\textcolor{blue}{arXiv:1503.06330}}, 2015.



\bibitem{Giselsson2013a}
P.~Giselsson.
\newblock Improving fast dual ascent for MPC -- part I: the distributed case.
\newblock \href{https://arxiv.org/abs/1312.3012}{\textcolor{blue}{arXiv:1312.3012}}, 2013.

\bibitem{Giselsson2013b}
P.~Giselsson.
\newblock Improving fast dual ascent for MPC -- part II: the embedded case.
\newblock \href{https://arxiv.org/abs/1312.3013}{\textcolor{blue}{arXiv:1312.3013}}, 2013.

\bibitem{Hartley2015}
E.~N.~Hartley.
\newblock A tutorial on model predictive control for spacecraft rendezvous.
\newblock In \emph{European Control Conference}, 2015.

\bibitem{Herceg2015}
M.~Herceg, M.~Kvasnica, C.~N.~Jones and M.~Morari.
\newblock Tube-based robust model predictive control.
\newblock Tutorial notes, 2015.

\bibitem{Lorenzetti2019}
J.~Lorenzetti and M.~Pavone.
\newblock A simple and efficient tube-based robust output feedback model predictive control scheme.
\newblock \href{https://arxiv.org/abs/1911.07360}{\textcolor{blue}{arXiv:1911.07360}}, 2019.

\bibitem{Mayne2000}
D.~Q.~Mayne, J.~B.~Rawlings, C.~V.~Rao and P.~O.~M.~Scokaert.
\newblock Constrained model predictive control: stability and optimality.
\newblock \emph{Automatica}, pages 789--814, 2000.

\bibitem{Mayne2005}
D.~Q.~Mayne, M.~M.~Seron and S.~V.~Rako\-vi\'{c}.
\newblock Robust model predictive control of constrained linear systems with bounded disturbances.
\newblock \emph{Automatica}, pages 219--224, 2005.

\bibitem{Oestreich2023}
C.~E.~Oestreich, R.~Linares and R.~Gondhalekar.
\newblock Tube-based model predictive control with uncertainty identification for autonomous spacecraft maneuvers.
\newblock \emph{Journal of Guidance, Control, and Dynamics}, pages 1--15, 2023.

\bibitem{Patrinos2016}
N.~Patrinos and A.~Bemporad.
\newblock Proximal gradient methods for MPC.
\newblock Lecture notes, IMT Lucca, 2016.

\bibitem{Rakovic2008}
S.~V.~Rako\-vi\'{c}.
\newblock Tube based model predictive control.
\newblock Tutorial slides, 2008.

\bibitem{Rawlings2017}
J.~B.~Rawlings, D.~Q.~Mayne and M.~Diehl.
\newblock \emph{Model Predictive Control: Theory, Computation, and Design}.
\newblock Nob Hill Publishing, 2nd edition, 2017.

\bibitem{Richter2012}
S.~Richter, C.~N.~Jones and M.~Morari.
\newblock Computational complexity certification for real-time MPC with input constraints based on the fast gradient method.
\newblock \emph{IEEE Transactions on Automatic Control}, pages 1391--1403, 2012.

\bibitem{Specht2023}
C.~Specht, A.~Bishnoi and R.~Lampariello.
\newblock Autonomous spacecraft rendezvous using tube-based model predictive control: design and application.
\newblock \emph{Journal of Guidance, Control, and Dynamics},
  pages 1243--1261, 2023.

\bibitem{Szmuk2025}
M.~Szmuk, B.~A\c{c}{\i}kme\c{s}e and A.~W.~Reynolds.
\newblock Successive convexification for passively-safe spacecraft rendezvous on near rectilinear halo orbit.
\newblock \href{https://arxiv.org/abs/2505.17251}{\textcolor{blue}{arXiv:2505.17251}}, 2025.


\bibitem{Wu2021}
L.~Wu and A.~Bemporad.
\newblock A simple and fast coordinate-descent augmented-Lagrangian solver for model predictive control.
\newblock \href{https://arxiv.org/abs/2109.10205}{\textcolor{blue}{arXiv:2109.10205}}, 2021.

\bibitem{Zamblera2023}
D.~Zamblera.
\newblock Tube model predictive control for robust close-range relative motion.
\newblock Master's thesis, Politecnico di Milano, 2023.

\bibitem{Boyd2011}
S.~Boyd, N.~Parikh, E.~Chu, B.~Peleato and J.~Eckstein.
\newblock Distributed optimization and statistical learning via the alternating direction method of multipliers.
\newblock \emph{Foundations and Trends in Machine Learning}, pages 1--122, 2011.

\bibitem{Richter2009}
S.~Richter, C.~N.~Jones and M.~Morari.
\newblock Real-time input-constrained MPC using fast gradient methods.
\newblock In \emph{Proceedings of the 48th IEEE Conference on Decision and
  Control}, pages 7387--7393, 2009.

\bibitem{Frison2016}
G.~Frison.
\newblock \emph{Algorithms and Methods for Fast Model Predictive Control}.
\newblock PhD thesis, Technical University of Denmark, 2016.

\bibitem{Rao1998}
C.~V.~Rao, S.~J.~Wright and J.~B.~Rawlings.
\newblock Application of interior-point methods to model predictive control.
\newblock \emph{Journal of Optimization Theory and Applications},
  pages 723--757, 1998.

\bibitem{Fehse2003}
W.~Fehse.
\newblock Automated Rendezvous and Docking of Spacecraft.
\newblock Cambridge Aerospace Series. Cambridge University Press, 2003.

\bibitem{Gaias2015}
G.~Gaias, J.-S.~Ardaens and O.~Montenbruck.
\newblock Model of J2 perturbed satellite relative motion with time-varying differential drag.
\newblock \emph{Celestial Mechanics and Dynamical Astronomy},
  pages 411--433, 2015.

\bibitem{Lovell2004}
T.~A.~Lovell and S.~C.~Tragesser.
\newblock Guidance for relative motion of low Earth orbit spacecraft based on relative orbit elements.
\newblock In \emph{AIAA/AAS Astrodynamics Specialist Conference}, 2004.

\bibitem{Nesterov1983}
Y.~Nesterov.
\newblock A method of solving a convex programming problem with convergence rate \(O(1/k^2)\).
\newblock \emph{Soviet Mathematics Doklady}, pages 372--376, 1983.

\bibitem{Stellato2020}
B.~Stellato, G.~Banjac, P.~Goulart, A.~Bemporad and S.~Boyd.
\newblock OSQP: an operator splitting solver for quadratic programs.
\newblock \emph{Mathematical Programming Computation}, pages 637--672, 2020.

\bibitem{Jerez2014}
J.~L.~Jerez, P.~J.~Goulart, S.~Richter, G.~A.~Constantinides,
  E.~C.~Kerrigan and M.~Morari.
\newblock Embedded online optimization for model predictive control at megahertz rates.
\newblock \emph{IEEE Transactions on Automatic Control}, pages 3238--3251,
  2014.


\bibitem{MayneKerrigan2011}
D.~Q.~Mayne, E.~C.~Kerrigan, E.~J.~van~Wyk and P.~Falugi.
\newblock Tube-based robust nonlinear model predictive control.
\newblock \emph{International Journal of Robust and Nonlinear Control},
  pages 1341--1353, 2011.

\bibitem{MayneKerrigan2007}
D.~Q.~Mayne and E.~C.~Kerrigan.
\newblock Tube-based nonlinear model predictive control.
\newblock \emph{IFAC Proceedings Volumes}, pages 36--41, 2007.
\newblock 7th IFAC Symposium on Nonlinear Control Systems, Pretoria,
  South Africa.

\bibitem{Zhang2022Koopman}
X.~Zhang, W.~Pan, R.~Scattolini, S.~Yu and X.~Xu.
\newblock Robust tube-based model predictive control with Koopman operators.
\newblock \emph{Automatica}, article 110114, 2022.

\bibitem{Zhou2024Delay}
L.~Zhou, S.~Ma, L.~Cen, J.~Ma and T.~Peng.
\newblock Tube-based model predictive control for linear systems with bounded disturbances and input delay.
\newblock \emph{ISA Transactions}, pages 56--66, 2024.

\bibitem{DoffSotta2026}
M.~Doff-Sotta, Z.~A-Rahman and M.~Cannon.
\newblock Computationally tractable robust nonlinear model predictive
  control using DC programming.
\newblock \href{https://arxiv.org/abs/2602.01164}{\textcolor{blue}{arXiv:2602.01164}}, 2026.

\bibitem{Tranos2022}
D.~Tranos, A.~Russo and A.~Proutiere.
\newblock Self-tuning tube-based model predictive control.
\newblock \href{https://arxiv.org/abs/2210.00502}{\textcolor{blue}{arXiv:2210.00502}}, 2022.

\bibitem{SzmukAcikmese2018}
M.~Szmuk and B.~A\c{c}{\i}kme\c{s}e.
\newblock Successive convexification for 6-DoF Mars rocket powered landing with free-final-time.
\newblock In \emph{AIAA Guidance, Navigation, and Control Conference},
  2018.

\bibitem{Massera2020}
C.~M.~Massera, M.~H.~Terra and D.~F.~Wolf.
\newblock Tube-based guaranteed cost robust model predictive control for
  linear systems subject to parametric uncertainties.
\newblock \href{https://arxiv.org/abs/2012.05349}{\textcolor{blue}{arXiv:2012.05349}}, 2020.

\bibitem{Parsi2022}
A.~Parsi, A.~Iannelli and R.~S.~Smith.
\newblock Scalable tube model predictive control of uncertain linear systems using ellipsoidal sets.
\newblock \emph{International Journal of Robust and Nonlinear Control},
  2022.

\bibitem{Kempf2020}
I.~Kempf, P.~Goulart and S.~Duncan.
\newblock Fast gradient method for model predictive control with input rate and amplitude constraints.
\newblock \emph{IFAC-PapersOnLine}, pages 6542--6547, 2020.
\newblock 21st IFAC World Congress.

\bibitem{Herceg2013}
M.~Herceg, M.~Kvasnica, C.~N.~Jones and M.~Morari.
\newblock Multi-parametric toolbox 3.0.
\newblock In \emph{European Control Conference (ECC)}, pages 502--510,
  2013.

\bibitem{Langson2004}
W.~Langson, I.~Chryssochoos, S.~V.~Rakovi\'c and D.~Q.~Mayne.
\newblock Robust model predictive control using tubes.
\newblock \emph{Automatica}, pages 125--133, 2004.

\bibitem{Rakovic2005RPI}
S.~V.~Rakovi\'c, E.~C.~Kerrigan, K.~I.~Kouramas and D.~Q.~Mayne.
\newblock Invariant approximations of the minimal robust positively invariant set.
\newblock \emph{IEEE Transactions on Automatic Control}, pages 406--410, 2005.

\bibitem{Hartley2012}
E.~N.~Hartley, J.~M.~Maciejowski, H.~Cervantes and L.~Kondiakos.
\newblock Model predictive control system design and implementation for spacecraft rendezvous.
\newblock \emph{Control Engineering Practice}, pages 695--713, 2012.

\bibitem{Hartley2013}
E.~N.~Hartley, M.~Gallieri and J.~M.~Maciejowski.
\newblock Terminal spacecraft rendezvous and capture with LASSO model predictive control.
\newblock \emph{International Journal of Control}, pages 2104--2113, 2013.



\bibitem{Golub1996}
G.~H.~Golub and C.~F.~Van~Loan.
\newblock Matrix Computations.
\newblock Johns Hopkins University Press, 3rd edition, 1996.

\bibitem{Benoit1924}
Commandant Beno\^it. Note sur une m\'ethode de r\'esolution des \'equations
normales provenant de l'application de la m\'ethode des moindres carr\'es
aux probl\`emes de triangulation. \textit{Bulletin G\'eod\'esique}, 1924.

\end{thebibliography}
\end{document}